\documentclass[a4paper,12pt]{amsart}
\usepackage{amssymb}
\usepackage{ifthen}
\usepackage[usenames]{color}
\usepackage{graphicx}
\nonstopmode \numberwithin{equation}{section}
\setlength{\textwidth}{15cm} \setlength{\oddsidemargin}{0cm}
\setlength{\evensidemargin}{0cm} \setlength{\footskip}{40pt}
\pagestyle{plain}

\newtheorem{thm}{Theorem}[section]

\newtheorem{cor}{Corollary}[section]
\newtheorem{lem}{Lemma}[section]
\newtheorem{prop}{Proposition}[section]

\newtheorem{claim}{Claim}[section]
\newtheorem{subclaim}{Subclaim}
\newtheorem{conj}[equation]{Conjecture}
\newtheorem{case}{Case}[section]
\newtheorem*{mysolution}{Solution}
\newtheorem{step}{Step}[section]
\theoremstyle{definition}
\newtheorem{defn}{Definition}[section]
\newtheorem{examp}{Example}[section]
\newtheorem{prob}[equation]{Problem}
\newtheorem{ques}[equation]{Question}
\newtheorem{rem}{Remark}[section]
\newcounter {own}
\def\theown {\thesection       .\arabic{own}}

\newenvironment{pf}[1][]{%
 \vskip 3mm
 \noindent
 \ifthenelse{\equal{#1}{}}%
  {{\slshape Proof. }}%
  {{\slshape #1.} }%
 }%
{\qed\bigskip}

\newcounter{alphabet}
\newcounter{tmp}
\newenvironment{Thm}[1][]{\refstepcounter{alphabet}%
\bigskip%
\noindent%
{\bf Theorem \Alph{alphabet}}%
\ifthenelse{\equal{#1}{}}{}{ (#1)}%
{\bf .} \itshape}{\vskip 8pt}

\makeatletter

\newcommand{\Rref}[1]{\@ifundefined{r@#1}{}{\setcounter{tmp}{\ref{#1}}\Alph{tmp}}}




\def\be{\begin{equation}}
\def\ee{\end{equation}}

\newcommand{\ben}{\begin{enumerate}}
\newcommand{\een}{\end{enumerate}}

\newcommand{\blem}{\begin{lem}}
\newcommand{\elem}{\end{lem}}
\newcommand{\bthm}{\begin{thm}}
\newcommand{\ethm}{\end{thm}}
\newcommand{\bcor}{\begin{cor}}
\newcommand{\ecor}{\end{cor}}
\newcommand{\beg}{\begin{examp}}
\newcommand{\eeg}{\end{examp}}
\newcommand{\begs}{\begin{examples}}
\newcommand{\eegs}{\end{examples}}
\newcommand{\bdefe}{\begin{defn}}
\newcommand{\edefe}{\end{defn}}
\newcommand{\bprob}{\begin{prob}}
\newcommand{\eprob}{\end{prob}}
\newcommand{\bques}{\begin{ques}}
\newcommand{\eques}{\end{ques}}
\newcommand{\bei}{\begin{itemize}}
\newcommand{\eei}{\end{itemize}}
\newcommand{\bcl}{\begin{claim}}
\newcommand{\ecl}{\end{claim}}
\newcommand{\bscl}{\begin{subclaim}}
\newcommand{\escl}{\end{subclaim}}
\newcommand{\bca}{\begin{case}}
\newcommand{\eca}{\end{case}}
\newcommand{\bstep}{\begin{step}}
\newcommand{\estep}{\end{step}}
\newcommand{\bsol}{\begin{mysolution}}
\newcommand{\esol}{\end{mysolution}}
\newcommand{\bcon}{\begin{conj}}
\newcommand{\econ}{\end{conj}}
\newcommand{\bcons}{\begin{conjs}}
\newcommand{\econs}{\end{conjs}}
\newcommand{\bprop}{\begin{prop}}
\newcommand{\eprop}{\end{prop}}
\newcommand{\br}{\begin{rem}}
\newcommand{\er}{\end{rem}}
\newcommand{\brs}{\begin{rems}}
\newcommand{\ers}{\end{rems}}
\newcommand{\bo}{\begin{obser}}
\newcommand{\eo}{\end{obser}}
\newcommand{\bos}{\begin{obsers}}
\newcommand{\eos}{\end{obsers}}
\newcommand{\bpf}{\begin{pf}}
\newcommand{\epf}{\end{pf}}
\newcommand{\ba}{\begin{array}}
\newcommand{\ea}{\end{array}}
\newcommand{\beq}{\begin{eqnarray}}
\newcommand{\beqq}{\begin{eqnarray*}}
\newcommand{\eeq}{\end{eqnarray}}
\newcommand{\eeqq}{\end{eqnarray*}}

\begin{document}

\title{Schwarz lemma for the solutions to the Dirichlet problems for the invariant Laplacians}

\author{Qianyun Li}
\address{Qianyun Li, Key Laboratory of Computing and Stochastic Mathematics (Ministry of Education), School of Mathematics and Statistics, Hunan Normal University, Changsha, Hunan 410081, P. R. China}
\email{liqianyun@hunnu.edu.cn}

\author{Jiaolong Chen${}^{~\mathbf{*}}$}
\address{Jiaolong Chen, Key Laboratory of Computing and Stochastic Mathematics (Ministry of Education), School of Mathematics and Statistics, Hunan Normal University, Changsha, Hunan 410081, P. R. China}
\email{jiaolongchen@sina.com}

\keywords{Invariant Laplacians, Schwarz Lemma, Hardy space.\\
$^{\mathbf{*}}$Corresponding author}

\subjclass[2010]{Primary 31B05; Secondary 42B30.}

\maketitle

\makeatletter\def\thefootnote{\@arabic\c@footnote}\makeatother

\begin{abstract}
The main purpose of this paper is to establish a Schwarz lemma for
 the solutions to the Dirichlet problems for the invariant Laplacians.
The obtained
result  of this paper is a generalization  of the corresponding known
results  (\cite[Theorem 1.1]{Chen21}  and \cite[Theorem 2.1]{kalaj2018}).
\end{abstract}

\maketitle

\section{Introduction}\label{sec-1}
For $n\ge 2$, let $\mathbb{R}^{n}$ denote the usual real vector space of dimension $n$.
For $x=(x_{1},\ldots,x_{n})\in \mathbb{R}^{n}$, sometimes we  identify each point $x$ with a column vector.
For two column vectors $x,y\in \mathbb{R}^{n}$, we use $\langle x,y\rangle$ to denote the inner product of $x$ and $y$.
 We denote by $\mathbb{B}^n$  the unit ball in $\mathbb{R}^n$ and $\mathbb{B}_n$   the unit ball in $\mathbb{C}^n\cong\mathbb{R}^{2n}$.
For $A=\big(a_{ij}\big)_{n\times n}\in \mathbb{R}^{n\times n}$,
 the matrix norm of $A$ is defined by
 $\|A\|=\sup\{|A\xi|:\; \xi\in \mathbb{S}^{n-1}\}$,
 where $\mathbb{S}^{n-1}=\{x\in \mathbb{R}^{ n}:|x|=1\}$.

In this paper, we consider  the following family of differential operators
$$
	\Delta_{\alpha} =\left(1-|x|^{2}\right)\left(\frac{1-|x|^{2}}{4} \sum_{i=1}^{n} \frac{\partial^{2}  }{\partial x_{i}^{2}} +\alpha\sum_{i=1}^{n} x_{i} \frac{\partial  }{\partial x_{i}}+  \alpha\left(\frac{n}{2}-1-\alpha\right)   \right)
$$
 for $\alpha\in \mathbb{R}$  (cf.  \cite{Liu04, Liu09}).

In this paper, we call $\Delta_{\alpha}$ the (M\"{o}bius) invariant Laplacian, since
\begin{equation}\label{eq-1.1}
 \Delta_{\alpha}\left\{\left(\operatorname{det} D\psi (x)\right)^{\frac{n-2-2 \alpha}{2 n}} u(\psi(x))\right\}=\left(\operatorname{det} D\psi (x)\right)^{\frac{n-2-2 \alpha}{2 n}}\left(\Delta_{\alpha} u\right)(\psi(x))
\end{equation}
for every  $u \in C^{2}\left(\mathbb{B}^n,\mathbb{R}^n\right)$ and for every  $\psi \in M (\mathbb{B}^n )$ (see \cite[Proposition 3.2]{Liu09}).
Here  $ M (\mathbb{B}^n )$  denotes the group of those M\"{o}bius transformations that map   $\mathbb{B}^{n}$ onto  $\mathbb{B}^{n}$,
  and $D \psi (x)$  denotes the Jacobian matrix of $\psi$.

If $u \in C^{2}\left(\mathbb{B}^n,\mathbb{R}^n\right)$ and $\Delta_{\alpha}u =0$ in $ \mathbb{B}^{n}$,
 then for any unitary transformation $A$ in $\mathbb{R}^{n}$,
 \eqref{eq-1.1} guarantees that
 $$\Delta_{\alpha}(u\circ A)  = 0.$$

 Obviously,  $\Delta_{\alpha}$   is the Laplace-Beltrami operator  $\Delta_{h}$  when  $\alpha=n / 2-1$ (cf.  \cite{chen2018, Chen21}).
The differential operator  $\Delta_{\alpha}$ can also be regarded as the real counterpart  of the $(\alpha,\beta)$-Laplacian $\Delta_{\alpha,\beta}$
in $\mathbb{B}_{n}$ given by
$$
	\Delta_{\alpha,\beta} =4\left(1-|z|^{2}\right)\left(\sum_{i,j}(\delta_{i,j}-z_{i}\overline{z_{j}}) \frac{\partial^{2} }{\partial z_{i}\partial \overline{z_{j}}}+\alpha\sum_{j}  z_{j} \frac{\partial  }{\partial z_{j}}+\beta\sum_{j} \overline{ z_{j}} \frac{\partial  }{\partial \overline{z_{j}}}-\alpha\beta \right),
$$
which was introduced by Geller \cite{gell} and extensively studied by many authors (see \cite{A1, A2, A3, B4, Z5}).
On the other hand, the operator  $\Delta_{\alpha}$ is closely related to polyharmonic mappings.  Indeed, the differential operator  $\mathbf{L}_{\alpha}:=   \left(4 /\left(1-|x|^{2}\right)\right) \Delta_{\alpha}$  plays a crucial role in the modified Almansi representation for polyharmonic mappings in \cite{Liu21} as well as in the cellular decomposition theorem for polyharmonic mappings in \cite{AH2014}.

In \cite{Liu04}, Liu and Peng  considered the  existence and the representation of the solutions to the following
Dirichlet problem:
\begin{equation}\label{eq-1.2}
\left\{\begin{array}{ll}
	\Delta_{\alpha} u(x)=0, & x\in\mathbb{B}^{n}, \\
	u(\zeta)=f(\zeta),& \zeta\in\mathbb{S}^{n-1}.
\end{array}\right.
\end{equation}

It is known that if   $u\in C^{2}(\mathbb{B}^{n},\mathbb{ R}^{n})\cap C (\overline{\mathbb{B}}^{n},\mathbb{ R}^{n})$, then the Dirichlet problem \eqref{eq-1.2}
has a solution  if and only if $\alpha>-1 / 2$,
 where $\overline{\mathbb{B}}^{n}=\mathbb{B} ^{n}\cup \mathbb{S}^{n-1}$ (see \cite[Theorem 2.4]{Liu04}).
 In this case the solution is unique and   can be expressed by the following Poisson type integral:
\begin{equation}\label{eq-1.3}
u(x)=\int_{\mathbb{S}^{n-1}} P_{\alpha}(x, \zeta) f(\zeta) d \sigma(\zeta)=P_{\alpha}[f](x),	
\end{equation}
where $d \sigma$  denotes the normalized surface measure on  $\mathbb{S}^{n-1}$   so that $\sigma(\mathbb{S}^{n-1})=1$,
and
 \begin{equation}\label{eq-1.4}
P_{\alpha}(x, \zeta)=C_{n, \alpha} \frac{\left(1-|x|^{2}\right)^{1+2 \alpha}}{|x-\zeta|^{n+2 \alpha}}
\;\;\text{with}\;\;C_{n, \alpha}=\frac{\Gamma\left(\frac{n}{2}+\alpha\right) \Gamma(1+\alpha)}{\Gamma\left(\frac{n}{2}\right) \Gamma(1+2 \alpha)}.
\end{equation}

Further, if $u$ is the solution to \eqref{eq-1.2} and  $f=\sum_{k} Y_{k}$  is the spherical harmonic expansion of  $f$,
then for any $x=r\zeta\in \mathbb{B}^{n}$, we have
$$	u(x)=\sum_{k=0}^{\infty} \frac{\Phi_{k}^{\alpha}\left(r^{2}\right)}{\Phi_{k}^{\alpha}(1)}r^{k} Y_{k} (\zeta),
$$	
where $Y_{k} \in \mathcal{H}_{k}(\mathbb{S}^{n-1},\mathbb{ R}^{n})$  and
 \begin{equation}\label{eq-1.5}
\Phi_{k}^{\alpha} (r^{2} )={_{2}F_{1}}\left(-\alpha,k+\frac{n}{2}-1-\alpha,k+\frac{n}{2};r^{2} \right)
 \end{equation}
(see \cite[Theorem 2.4]{Liu04}).
Here and hereafter,	 $\mathcal{H}_{k}\left(\mathbb{S}^{n-1},\mathbb{R}^{n}\right)$ denotes the space of
	spherical harmonic mappings of degree $k$ from $\mathbb{S}^{n-1}$ into $\mathbb{ R}^{n}$.

For the case when $n=2$, we refer  to  \cite{Adel2021,  AH2014, Chen23, Long2022, ol2014} for basic properties
of the mappings defined in  \eqref{eq-1.3}.
 For higher dimensional cases, see \cite{Liu04,Liu09,Zhou22}.
 In this paper, we focus our investigations on the case when $n\geq3$.

For $p\in(0,\infty]$, we use  $L^{p}(\mathbb{S}^{n-1},\mathbb{ R}^{n})$ to denote the space of   those  measurable  mappings
$f: \mathbb{S}^{n-1}\rightarrow\mathbb{ R}^{n}$ such that  $\|f\|_{L^p}<\infty$, where
$$\|f\|_{L^{p}}=\begin{cases}
\displaystyle \;\left(\int_{\mathbb{S}^{n-1}}|f(\zeta)|^{p} d \sigma(\zeta)\right)^{1 / p} , & \text{ if } p\in (0,\infty),\\
\displaystyle \;\text{esssup}_{\zeta\in \mathbb{S}^{n-1}}  |f( \zeta)|  , \;\;\;\;& \text{ if } p=\infty.
\end{cases}$$

For a finite signed  Borel measure $\mu$ on $\mathbb{S}^{n-1}$ and $x\in \mathbb{B}^{n}$,
we define
\begin{equation}\label{eq-1.6}
P_{\alpha}[\mu](x)=\int_{\mathbb{S}^{n-1}} P_{\alpha}(x, \zeta) d \mu(\zeta) .	
\end{equation}
Differentiating under the integral sign in (\ref{eq-1.6}), we see that $\Delta_{\alpha}(P_{\alpha}[\mu])=0$ in $\mathbb{B}^{n}$.

It is natural to identify each $f \in L^{1}(\mathbb{S}^{n-1}, \mathbb{R})$ with the measure $\mu_{f}  $ defined on Borel sets $E \subset \mathbb{S}^{n-1} $ by
\begin{equation*}
	\mu_{f}(E)=\int_{E} f d \sigma.
\end{equation*}
Therefore,  $d \mu_{f}=f d \sigma$.

 For convenience,  throughout this paper, we use $P_{\alpha}[f]$ to denote the   Poisson type integral of $f\in L^{1}(\mathbb{S}^{n-1},\mathbb{R}^{n})$ in $\mathbb{B}^{n}$, where
$$
P_{\alpha}[f](x)=\int_{\mathbb{S}^{n-1}} P_{\alpha}(x, \zeta) f(\zeta) d \sigma(\zeta).	
$$


For $p\in(0,\infty]$, the {\it Hardy space} $\mathcal{H}^{p}(\mathbb{B}^{n}, \mathbb{ R}^{n} )$ consists of all those mappings
$f: \mathbb{B}^{n}\rightarrow\mathbb{ R}^{n}$ such that $f$ is measurable, $M_{p}(r,f)$ exists for all $r\in (0,1)$ and $\|f\|_{\mathcal{H}^p}<\infty$, where
$$\|f\|_{\mathcal{H}^p}=\sup_{0<r<1} M_{p}(r,f) $$
and
$$\;\;M_{p}(r,f)=\begin{cases}
\displaystyle \;\left( \int_{\mathbb{S}^{n-1}} |f(r\zeta)|^{p}d\sigma(\zeta)\right)^{\frac{1}{p}} , & \text{ if } p\in (0,\infty),\\
\displaystyle \;\sup_{\zeta\in \mathbb{S}^{n-1}}  |f(r\zeta)|  , \;\;\;\;& \text{ if } p=\infty.
\end{cases}$$

Similarly, for $p\in(0,\infty]$, we use $H^p(\mathbb{B}_{n},\mathbb{C}^{n})$ to denote
the Hardy space of mappings from $\mathbb{B}_n$ into $\mathbb{C}^{n}$.

The classical Schwarz lemma states that $|f(z)|\leq |z|$ for every holomorphic mapping of the unit disk $\mathbb{B}_1$
 into itself satisfying the condition $f(0)=0$.
In 1950,  Macintyre and Rogosinski generalized the above result to the setting of holomorphic mappings in  $H^p(\mathbb{B}_1,\mathbb{C})$ (see \cite[Section 14]{maro}).
They proved that
if $f$ is a holomorphic mapping in $\mathbb{B}_1$ such that $f(0)=0$  and $\|f\|_{H^p}<\infty$ with  $p\in[1,\infty)$,
then the following inequality is sharp:
$$
|f(z)|\le \frac{|z|}{(1-|z|^2)^{1/p}}\|f\|_{H^p},
$$
where $z\in \mathbb{B}_1$.
For the high dimensional case, see \cite[Theorem~4.17]{zhu}.

The classical Schwarz lemma for harmonic mappings (cf. \cite[Lemma 6.24]{ABR}) states that
if $f\in \mathcal{H}^{\infty}(\mathbb{B}^{n},\mathbb{R}^{n})$ is a  harmonic mapping with $f(0)=0$, then
$$
|f(x)|\le U(|x|e_{n})\|f\|_{\mathcal{H}^{\infty}}.
$$
Here $e_{n}=(0,\dots,0,1)\in \mathbb{S}^{n-1}$ and $U$ is a harmonic function of $\mathbb{B}^{n}$ into $[-1,1]$ defined by
$$
U(x)= P[\chi_{\mathbb{S}^{n-1}_{+}}-\chi_{\mathbb{S}^{n-1}_{-}}](x),
$$
where $\chi$ is the indicator function,
$\mathbb{S}^{n-1}_{+}=\{x\in \mathbb{S}^{n-1}: x_n \geq 0\},$ $\mathbb{S}^{n-1}_{-}=\{x\in \mathbb{S}^{n-1}: x_n \leq 0\}$ and
$P[\chi_{\mathbb{S}^{n-1}_{+}}-\chi_{\mathbb{S}^{n-1}_{-}}]$ is the Poisson integral of $\chi_{\mathbb{S}^{n-1}_{+}}-\chi_{\mathbb{S}^{n-1}_{-}}$ with respective to $\Delta$.
Recently, Kalaj \cite[Theorem 2.1]{kalaj2018} considered harmonic mappings $f$ in
 $\mathcal{H}^p(\mathbb{B}^{n},\mathbb{R}^{n})$ with $p\in[1,\infty]$ and $f(0)=0$.
For those mappings, he obtained the following two sharp estimates:
$$
|f(x)|\le g_p(|x|)\|f\|_{\mathcal{H}^p}\;\;\text{and}\;\;
\|Df(0)\|\le n \left(\frac{\Gamma\left(\frac{n}{2}\right) \Gamma\left(\frac{1+q}{2}\right)}{\sqrt \pi\Gamma\left(\frac{n+q}{2}\right)}\right)^{\frac{1}{q}}\|f\|_{\mathcal{H}^p},
$$
where $x\in \mathbb{B}^{n}$, $g_{p}$ is a smooth mapping and $q$ is the conjugate of $p$.
See \cite{bur, Chen21} for related discussions on harmonic mappings and hyperbolic harmonic mappings.

The main purpose of this paper is to consider the Schwarz lemma for the solutions to the Dirichlet problems \eqref{eq-1.2}.
 Before the statement of the
result, for convenience, we introduce the following notational conventions.

 Let $r\in[0,1)$, $p\in[1,\infty]$ and $q$ be its conjugate. Define
\begin{equation}\label{eq-1.7}
G_p(r)=\left\{
            \begin{array}{ll}
              \inf_{a\in [0,\infty)}\sup_{\eta\in \mathbb{S}^{n-1}} |P_\alpha(re_{n},\eta)-a|, & \hbox{if $q=\infty$;} \\
              \inf_{a\in [0,\infty)}\left(\int_{\mathbb{S}^{n-1}} |P_\alpha(re_{n},\eta)-a|^qd\sigma(\eta)\right)^{1/q}, & \hbox{if $q\in[1,\infty)$}.
            \end{array}
          \right.
\end{equation}
Then we have the following result.
\begin{thm}\label{thm-1.1}
Suppose that $u=P_{\alpha}[f]$ and $u(0)=0$, where
$f\in L^p(\mathbb{S}^{n-1},\mathbb{R}^{n})$ with $p\in[1,\infty]$  and $\alpha\in(-\frac{1}{2},+\infty)$.
 Then for any $x\in \mathbb{B}^n$,
\begin{equation}\label{eq-1.8}
|u(x)|\le G_p(|x|)\|f\|_{L^{p}},
\end{equation}
 and
 \begin{equation}\label{eq-1.9}
	\|D u(0)\| \leq C_{n,\alpha}(n+2\alpha)\left(\frac{\Gamma\left(\frac{n}{2}\right) \Gamma\left(\frac{1+q}{2}\right)}{\sqrt{\pi} \Gamma\left(\frac{n+q}{2}\right)}\right)^{\frac{1}{q}}\|f\|_{L^{p}} .
 \end{equation}
Both inequalities \eqref{eq-1.8} and \eqref{eq-1.9} are sharp.

In particular, if $p\in[1,\infty )$ and $\alpha\in(-\frac{1}{2},0]\cup[\frac{n}{2}-1,+\infty)$,
then $G_p(r)$  is an increasing homeomorphism  of $[0,1)$ into $[0,\infty)$ with $G_p(0)=0$ and $ \frac{d}{dr}G_p(r)$ is continuous in $[0,1)$;
if $p=\infty$ and $\alpha\in(-\frac{1}{2},0]\cup[\frac{n}{2}-1,+\infty)$,
then $G_{\infty}(r)=U_{\alpha}(re_{n})$
 is an increasing homeomorphism of $[0,1)$ onto itself and $ \frac{d}{dr}G_{\infty}(r)$ is continuous in $[0,1)$,
where
$U_{\alpha}=P_{\alpha}[\chi_{\mathbb{S}_{+}^{n-1}} -\chi_{\mathbb{S}_{-}^{n-1}}]$.
\end{thm}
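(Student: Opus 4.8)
The plan is to handle the two estimates first and then the structural assertions. Throughout I use that $P_\alpha(0,\zeta)=C_{n,\alpha}$ for every $\zeta\in\mathbb{S}^{n-1}$ (read off from \eqref{eq-1.4}), so that $u(0)=C_{n,\alpha}\int_{\mathbb{S}^{n-1}}f\,d\sigma$ and hence the hypothesis $u(0)=0$ is \emph{equivalent} to $\int_{\mathbb{S}^{n-1}}f\,d\sigma=0$. For \eqref{eq-1.8}, since $\int f\,d\sigma=0$ I may insert any constant $a$ and write $u(x)=\int_{\mathbb{S}^{n-1}}\bigl(P_\alpha(x,\zeta)-a\bigr)f(\zeta)\,d\sigma(\zeta)$; H\"older's inequality then gives $|u(x)|\le\|P_\alpha(x,\cdot)-a\|_{L^q}\|f\|_{L^p}$. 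Because $P_\alpha(x,\zeta)$ depends only on $|x|$ and $|x-\zeta|$, a rotation carrying $x$ to $|x|e_n$ leaves $\|P_\alpha(x,\cdot)-a\|_{L^q}$ unchanged; taking the infimum over $a$ and noting $P_\alpha\ge0$ (so the optimal $a$ is nonnegative) identifies this infimum with $G_p(|x|)$ from \eqref{eq-1.7}. Sharpness is the $L^p$--$L^q$ duality statement that the norm of the functional $g\mapsto\int P_\alpha(x,\cdot)g$ on mean-zero $g$ equals the $L^q$-distance of $P_\alpha(x,\cdot)$ to the constants; the vector-valued extremal is recovered from the scalar one by setting $f=g\,v$ for a fixed unit vector $v$, which preserves both $\|f\|_{L^p}$ and the mean-zero constraint.

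For \eqref{eq-1.9} I differentiate under the integral sign in \eqref{eq-1.3}. Since the factor $(1-|x|^2)^{1+2\alpha}$ has vanishing gradient at the origin, $\partial_{x_j}P_\alpha(0,\zeta)=C_{n,\alpha}(n+2\alpha)\zeta_j$, whence $Du(0)=C_{n,\alpha}(n+2\alpha)\int_{\mathbb{S}^{n-1}}f(\zeta)\zeta^{\top}\,d\sigma(\zeta)$. For a unit vector $\xi$, H\"older bounds $|Du(0)\xi|$ by $C_{n,\alpha}(n+2\alpha)\|f\|_{L^p}\bigl(\int_{\mathbb{S}^{n-1}}|\langle\zeta,\xi\rangle|^q\,d\sigma\bigr)^{1/q}$, and the last integral is rotation-invariant and evaluates, through a Beta-integral in the single variable $\zeta_n$, to $\Gamma(\tfrac n2)\Gamma(\tfrac{1+q}2)/(\sqrt\pi\,\Gamma(\tfrac{n+q}2))$. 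Equality is attained (approached, when $p=1$) by the explicit mean-zero map $f(\zeta)=\operatorname{sgn}(\zeta_n)\,|\zeta_n|^{q-1}e_n$, for which the H\"older pairing is an identity.

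For the ``in particular'' assertions, $G_p(0)=0$ is immediate because $P_\alpha(0,\cdot)$ is the constant $C_{n,\alpha}\ge0$. When $p=\infty$ (so $q=1$) the best $L^1$ constant is a median of $P_\alpha(re_n,\cdot)$, which by the symmetry $\eta_n\mapsto-\eta_n$ is the value at $\eta_n=0$; the median identity then collapses to $G_\infty(r)=\int_{\mathbb{S}^{n-1}}P_\alpha(re_n,\eta)\operatorname{sgn}(\eta_n)\,d\sigma=U_\alpha(re_n)$. The boundary behaviour follows from the size of the kernel: as $r\to1$ the spike of $P_\alpha(re_n,\cdot)$ forces $G_p(r)\to\infty$ for $p<\infty$, while $G_\infty(r)\to U_\alpha(e_n)=1$, giving the stated ranges $[0,\infty)$ and $[0,1)$. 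Continuity of $\tfrac{d}{dr}G_p$ comes from the smooth dependence of the integrand on $(r,a)$ together with the implicit-function/envelope theorem for the optimal $a(r)$ (unique by strict convexity when $q<\infty$); the resulting expansion near $r=0$ yields $\tfrac{d}{dr}G_p(0)=C_{n,\alpha}(n+2\alpha)\bigl(\int_{\mathbb{S}^{n-1}}|\zeta_n|^q\,d\sigma\bigr)^{1/q}$, consistently with \eqref{eq-1.9}.

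The decisive point is the monotonicity of $G_p$. Expanding the relevant boundary data in spherical harmonics reduces this to the monotonicity on $[0,1)$ of the radial profiles $\phi_k(r)=r^k\Phi_k^\alpha(r^2)/\Phi_k^\alpha(1)$ (for $p=\infty$ one has $U_\alpha(re_n)=\sum_{k\text{ odd}}c_k\phi_k(r)$ with $c_k>0$, and for general $p$ one combines the envelope formula with a sign-correlation argument against $\partial_rP_\alpha$, driven by the same profiles). Writing $s=r^2$, the inequality $\phi_k'\ge0$ is equivalent to $N_k(s):=k\Phi_k^\alpha(s)+2s\,\tfrac{d}{ds}\Phi_k^\alpha(s)\ge0$. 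For $\alpha\in(-\tfrac12,0]$ this is immediate: the parameters of the ${}_2F_1$ in \eqref{eq-1.5} make all its Taylor coefficients nonnegative, so $\Phi_k^\alpha$ and its derivative are nonnegative and $N_k\ge0$. For $\alpha\ge\tfrac n2-1$ I would instead apply Euler's transformation to write $\Phi_k^\alpha(s)=(1-s)^{1+2\alpha}\Psi_k(s)$ with $\Psi_k(s)={}_2F_1\!\bigl(k+\tfrac n2+\alpha,\,1+\alpha,\,k+\tfrac n2;s\bigr)$ now having nonnegative coefficients, reducing $N_k\ge0$ to a bracket linear in $\Psi_k$ and $\Psi_k'$. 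Here lies the main obstacle: as $s\to1$ the two most singular terms of this bracket cancel exactly, so pinning down the correct sign demands a careful subleading analysis, which I expect to carry out via the contiguous relations for ${}_2F_1$ together with the precise lower bound $\alpha\ge\tfrac n2-1$.
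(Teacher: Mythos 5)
Your handling of the two inequalities is sound and is essentially the paper's own argument: \eqref{eq-1.8} by inserting a constant, H\"older's inequality, and a rotation reducing to $x=|x|e_n$ (this is Lemma \ref{lem-3.1}); \eqref{eq-1.9} by differentiating the kernel at the origin and evaluating $\int_{\mathbb{S}^{n-1}}|\zeta_n|^q\,d\sigma$ (Lemma \ref{lem-3.2}), with the same extremal, since $q-1=q/p$. Likewise your median identification $G_\infty(r)=U_\alpha(re_n)$ and the strict-convexity/implicit-function treatment of the optimal $a(r)$ parallel Section \ref{sec-4} and Lemmas \ref{lem-2.3}--\ref{lem-2.6}, though you gloss over the real work those lemmas do for $q\in(1,2)$, where $|P_\alpha(re_n,\eta)-a|^{q-2}$ is a singular integrand and differentiation under the integral sign has to be justified.

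The genuine gap is the monotonicity of $G_p$ --- the one assertion where the hypothesis $\alpha\in(-\tfrac12,0]\cup[\tfrac n2-1,\infty)$ must enter --- and your proposed route fails there twice. First, the reduction to monotonicity of the profiles $\phi_k(r)=r^k\Phi_k^\alpha(r^2)/\Phi_k^\alpha(1)$ needs the coefficients $c_k$ to be nonnegative, and they are not: the $c_k$ are the values at $e_n$ of the zonal harmonic components of $\operatorname{sgn}(\eta_n)$, hence independent of $\alpha$, and they alternate in sign. Already for $\alpha=0$, $n=3$ (the harmonic case, $\phi_k(r)=r^k$) one has $\operatorname{sgn}(t)=\sum_{j\ge0}a_{2j+1}P_{2j+1}(t)$ with $a_{2j+1}=(4j+3)(-1)^j(2j)!/\bigl(2^{2j+1}j!(j+1)!\bigr)$, so $U_0(re_3)=\tfrac32 r-\tfrac78 r^3+\cdots$; monotonicity cannot be read off term by term, and even your ``immediate'' case $\alpha\in(-\tfrac12,0]$ is therefore not established. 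Second, for $\alpha\ge\tfrac n2-1$ you concede that the decisive sign analysis is open, and for general $p$ the ``sign-correlation argument'' is not an argument. The paper proves monotonicity by an entirely different mechanism, following \cite[Lemma 2.7 and Claim 2.3]{Chen21}: by sharpness/duality, $G_p(r)$ is the supremum of $\sup_{|x|=r}|v(x)|$ over all $v=P_\alpha[g]$ with $v(0)=0$ and $\|g\|_{L^p}\le1$; the maximum principle for $\Delta_\alpha$ (Lemma \ref{lem-2.1}, which rests on \cite[Theorem 3.5]{GT} and applies precisely because the zeroth-order coefficient $\alpha(\tfrac n2-1-\alpha)$ is nonpositive on the stated $\alpha$-range) forces these suprema to increase strictly in $r$, with the Hardy-space identity $\|P_\alpha[g]\|_{\mathcal{H}^{p}}=\|g\|_{L^{p}}$ (Lemma \ref{lem-2.2}) controlling the extremals. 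To complete your proof you would either need to adopt this maximum-principle argument, or find a way around the alternating coefficients; as written, the structural half of the theorem is unproved.
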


\begin{rem}
\begin{enumerate}
\item It seems unlikely that we can explicitly express the function $G_p(r)$ for general $p$. However we demonstrate some special cases $p=1,2,\infty$ in Section~\ref{sec-4}.
\item Theorem \ref{thm-1.1} is a generalization of
\cite[Theorem 1.1]{Chen21}  and \cite[Theorem 2.1]{kalaj2018}.
\item If $\alpha\in(0,\frac{n}{2}-1)$, then $G_{p}(r)$ doesn't always increase in $[0,1)$.
This can be seen by considering the case when $p=\infty$, $\alpha=1$ and $n=6$ (see Figure \ref{fig1}).
 \begin{figure}
 	\centering
 	\includegraphics[width=0.5\columnwidth,height=0.3\linewidth]{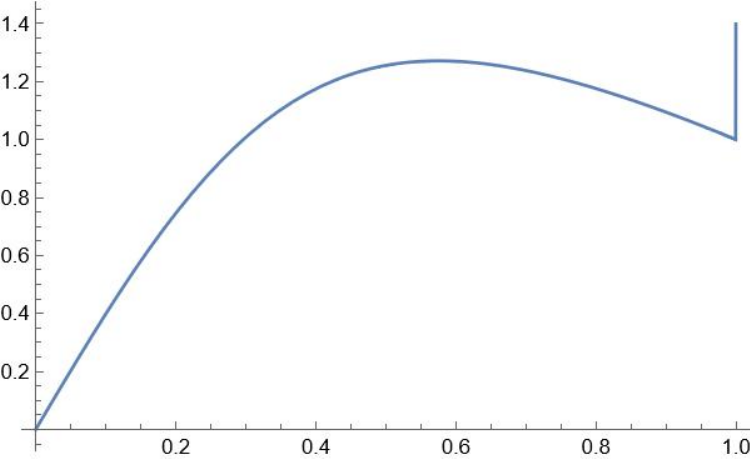}
 	\caption{The  image of $[0,1)$  under the mapping $G_{\infty} $ for the case when  $\alpha=1$ and $n=6$.}
 	\label{fig1}
 \end{figure}
\end{enumerate}

\end{rem}

This paper is organized as follows.
In Section \ref{sec-2}, some  known
results are recalled, and several preliminary results are proved.
In Section \ref{sec-3}, we present the
proof of Theorem \ref{thm-1.1} for the case $p\in(1,\infty)$.
 In Section \ref{sec-4}, we show Theorem 1.1 for the special cases $p=1,2,\infty$.

\section{Preliminaries}\label{sec-2}
In order to prove the main result, we need some preparation.
First, let us recall two known results, which will be used later on.

\begin{Thm}\label{Lem-A}\cite[Theorem 6.12]{ABR}
	If $X$ is a separable normed linear space, then every norm-bounded sequence in $X^{*}$ contains a weak* convergent subsequence.
\end{Thm}

\begin{Thm}\label{Lem-B}\cite[Theorem 2.3]{Liu04}
	Let $u\in C^{2}(\mathbb{B}^{n},\mathbb{R}^{n})$   satisfying $\Delta_{\alpha} u=0$. Then
 \begin{equation}\label{eq-2.1}
	u(x)=\sum_{k=0}^{\infty} \Phi_{k}^{\alpha}\left(|x|^{2}\right)|x|^{k} u_{k}\left(x^{\prime}\right) ,
 \end{equation}
	where  $x=|x|x'\in \mathbb{B}^{n}$, $u_{k} \in \mathcal{H}_{k}(\mathbb{S}^{n-1},\mathbb{R}^{n})$ and 	$ 	\Phi_{k}^{\alpha} $ is the mapping defined in \eqref{eq-1.5}.
The   series in \eqref{eq-2.1} converges uniformly and absolutely on every compact  set  in $\mathbb{B}^{n}$.
\end{Thm}

The following result is  about the maximum principle for the solutions to the Dirichlet problems \eqref{eq-1.2}.

\begin{lem}\label{lem-2.1}
Suppose that   $u\in C^{2} (\mathbb{B}^{n} ,\mathbb{R}^{n})$ satisfies $\Delta_{\alpha} u=0$ in $\mathbb{B}^{n}$
 with $\alpha\in(-\frac{1}{2},0]\cup [\frac{n}{2}-1 ,\infty)$.
For any  open set $\Omega $ with $\overline{\Omega} \subset \mathbb{B}^{n}$,
if there exists   a point $x_{0}\in\Omega $ such that $\max_{x\in \overline{\Omega}} |u(x)|=|u(x_{0})|$,
then $u$ is a constant in $ \Omega $.
\end{lem}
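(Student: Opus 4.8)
The plan is to reduce the vector-valued statement to a scalar maximum principle for $v=|u|^{2}$ and then invoke Hopf's strong maximum principle. First I would strip off the harmless factor: on $\mathbb{B}^{n}$ we have $1-|x|^{2}>0$, so $\Delta_{\alpha}u=0$ is equivalent to $Lu=0$, where
\[
L=\frac{1-|x|^{2}}{4}\sum_{i=1}^{n}\frac{\partial^{2}}{\partial x_{i}^{2}}+\alpha\sum_{i=1}^{n}x_{i}\frac{\partial}{\partial x_{i}}+c_{\alpha},\qquad c_{\alpha}:=\alpha\Big(\tfrac{n}{2}-1-\alpha\Big),
\]
acting componentwise. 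On the compact set $\overline{\Omega}\subset\mathbb{B}^{n}$ the leading coefficient $\tfrac{1-|x|^{2}}{4}$ is bounded below by a positive constant, so $L$ is uniformly elliptic there with bounded smooth coefficients.

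The decisive observation is the sign of the zeroth-order coefficient $c_{\alpha}$. For $\alpha\in(-\tfrac12,0]$ we have $\alpha\le0$ while $\tfrac n2-1-\alpha\ge\tfrac12>0$ (as $n\ge3$), and for $\alpha\in[\tfrac n2-1,\infty)$ we have $\alpha>0$ while $\tfrac n2-1-\alpha\le0$; in both cases $c_{\alpha}\le0$. On the excluded interval $(0,\tfrac n2-1)$ one would instead get $c_{\alpha}>0$, which is precisely why that range is omitted. Next I would set $v=|u|^{2}=\sum_{j}u_{j}^{2}$ and compute, using $Lu_{j}=0$ for each component, the identity
\[
\frac{1-|x|^{2}}{4}\Delta v+\alpha\langle x,\nabla v\rangle+2c_{\alpha}v=\frac{1-|x|^{2}}{2}\sum_{j}|\nabla u_{j}|^{2}\ge0 .
\]
Thus $v$ is a subsolution $\widetilde L v\ge0$ of the uniformly elliptic operator defined by $\widetilde L w=\tfrac{1-|x|^{2}}{4}\Delta w+\alpha\langle x,\nabla w\rangle+2c_{\alpha}w$, whose zeroth-order coefficient $2c_{\alpha}$ is non-positive.

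Since $v=|u|^{2}\ge0$ attains its maximum $M=|u(x_{0})|^{2}\ge0$ at the interior point $x_{0}$, the classical strong maximum principle for subsolutions of a uniformly elliptic operator with non-positive zeroth-order term (Hopf) yields $v\equiv M$ on the connected component of $\Omega$ containing $x_{0}$ (which is all of $\Omega$ when $\Omega$ is a domain). Feeding $v\equiv M$ back into the identity above gives $\tfrac{1-|x|^{2}}{2}\sum_{j}|\nabla u_{j}|^{2}=2c_{\alpha}M$. When $c_{\alpha}=0$ the right-hand side vanishes, forcing $\nabla u_{j}\equiv0$ for every $j$ and hence $u$ constant; when $c_{\alpha}<0$ the right-hand side is $\le0$ while the left-hand side is $\ge0$, so both vanish, again giving $\nabla u_{j}\equiv0$ (and incidentally $M=0$). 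In either case $u$ is constant on $\Omega$.

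The main obstacle I anticipate is not any single estimate but the bookkeeping: verifying $c_{\alpha}\le0$ on exactly the stated range of $\alpha$, producing the subsolution identity cleanly (so that the gradient terms collect into the manifestly nonnegative remainder), and then using that leftover gradient term to upgrade the conclusion from ``$|u|$ is constant'' to ``$u$ is constant''. One should also be mindful that the statement is only meaningful on a connected $\Omega$, so I would formally argue on the component of $x_{0}$.
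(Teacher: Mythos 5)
Your proof is correct, and it reaches the conclusion by a genuinely different route from the paper's. The paper scalarizes linearly rather than quadratically: assuming $|u(x_{0})|\neq 0$ (otherwise $u\equiv 0$ on $\overline{\Omega}$ trivially), it sets $\xi_{0}=u(x_{0})/|u(x_{0})|$ and notes that $w(x)=\langle u(x),\xi_{0}\rangle$ again satisfies $\Delta_{\alpha}w=0$ by linearity, while Cauchy--Schwarz gives $w(x)\leq|u(x)|\leq|u(x_{0})|=w(x_{0})$ on $\overline{\Omega}$; hence $w$ attains a nonnegative interior maximum, and the same tool you invoke (the strong maximum principle \cite[Theorem 3.5]{GT}, whose hypothesis $c\leq 0$ is precisely where the restriction to $c_{\alpha}=\alpha(\tfrac{n}{2}-1-\alpha)\leq 0$ enters) forces $w\equiv w(x_{0})$; the equality case of Cauchy--Schwarz then gives $u\equiv|u(x_{0})|\,\xi_{0}$. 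The paper's reduction buys brevity: there is no identity to verify and no case analysis on the sign of $c_{\alpha}$ at the end. Your reduction via $v=|u|^{2}$ buys transparency: the Bochner-type identity makes explicit why the admissible range of $\alpha$ is exactly the set where $c_{\alpha}\leq 0$, the leftover gradient term upgrades ``$|u|$ constant'' to ``$u$ constant'' without the Cauchy--Schwarz equality argument, and you obtain the sharper conclusion that $u\equiv 0$ whenever $c_{\alpha}<0$, i.e.\ unless $\alpha\in\{0,\tfrac{n}{2}-1\}$. Both arguments share the connectedness caveat you flagged: \cite[Theorem 3.5]{GT} is stated for domains, so one first gets constancy on the component of $\Omega$ containing $x_{0}$ (the paper is silent on this point); extending to a disconnected $\Omega$ would need an additional remark such as real-analyticity of solutions of $\Delta_{\alpha}u=0$.
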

\begin{proof}
For any $x\in\mathbb{B}^{n}$, let $u(x)=\big(u_{1}(x), \ldots,u_{n}(x)\big)$.
For each fixed point $\xi=(\xi_{1}, \ldots,\xi_{n})\in \mathbb{S}^{n-1}$,
  the inner product  $\langle u(x),\xi\rangle= \sum_{i=1}^{n} u_{j}(x)\xi_{j}$
 is a  real-valued   mapping satisfying $\Delta_{\alpha} \langle u(x),\xi\rangle=0$ w.r.t $x\in\mathbb{B}^{n}$.
Let $\Omega $ be an  open set with $\overline{\Omega} \subset \mathbb{B}^{n}$.
  Then \cite[Theorem 3.5]{GT} guarantees that the mapping $x\mapsto\langle u(x),\xi\rangle$ cannot achieve a non-negative maximum in $\Omega$
unless $\langle u(x),\xi\rangle$ is a constant in  $\Omega$.

Suppose that  there exists $x_{0}\in \Omega$ such that
$|u(x_{0})|=\max_{x\in \overline{\Omega}} |u(x)|\not=0$.
Let $\xi_{0}=\frac{u(x_{0})}{|u(x_{0})|}$.
Then for any $x\in \overline{\Omega}$,
$$
 \langle u(x ),\xi_{0}\rangle \leq |u(x)|\leq |u(x_{0})|= \langle u(x_{0}),\xi_{0}\rangle ,
$$
which means that the mapping $x\mapsto  \langle u(x ),\xi_{0}\rangle $ achieves a non-negative maximum at $x_{0}\in \Omega$.
Therefore, $\langle u(x ),\xi_{0}\rangle$ is a constant in $  \Omega $, and so is $u $.
 The proof of the lemma is complete.
\end{proof}

\begin{lem}\label{lem-2.2}
	Let    $u\in C^{2} (\mathbb{B}^{n} ,\mathbb{R}^{n})$ satisfy $\Delta_{\alpha} u=0$ in $\mathbb{B}^{n}$ with $\alpha\in(-\frac{1}{2},0]\cup[\frac{n}{2}-1,+\infty)$.
	 If $p\in (1,\infty]$, then  $u\in\mathcal{H}^{p}(\mathbb{B}^{n}, \mathbb{ R}^{n} )$
	if and only if there exists a mapping $f\in L^{p}\left(\mathbb{S}^{n-1},\mathbb{R}^{n}\right)$ such that $u=P_{\alpha}[f]$.
Further, $\|u\|_{\mathcal{H}^{p}}=\|f\|_{L^{p}}$.

\end{lem}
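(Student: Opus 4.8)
The plan is to prove the two implications separately and then read off the norm identity $\|u\|_{\mathcal H^p}=\|f\|_{L^p}$ by combining the resulting inequalities. A single object controls both directions: for $r\in[0,1)$ set $m(r)=\int_{\mathbb{S}^{n-1}}P_\alpha(r\zeta,\eta)\,d\sigma(\eta)$. Since $|r\zeta-\eta|^2=1+r^2-2r\langle\zeta,\eta\rangle$ is symmetric in $\zeta,\eta$, one has $P_\alpha(r\zeta,\eta)=P_\alpha(r\eta,\zeta)$, and rotational invariance of $d\sigma$ shows that $m(r)$ is independent of $\zeta$ and also equals $\int_{\mathbb{S}^{n-1}}P_\alpha(r\zeta,\eta)\,d\sigma(\zeta)$. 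Moreover $m(r)=P_\alpha[1](r\zeta)$ is the $\Delta_\alpha$-harmonic extension of the constant $1$, so Lemma~\ref{lem-2.1} (valid precisely because $\alpha\in(-\frac12,0]\cup[\frac n2-1,\infty)$) together with continuity up to $\mathbb{S}^{n-1}$ and positivity of $P_\alpha$ gives $0\le m(r)\le 1$.

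For sufficiency, assume $u=P_\alpha[f]$ with $f\in L^p$. Viewing $d\nu_{r\zeta}=P_\alpha(r\zeta,\cdot)\,d\sigma$ as a positive measure of total mass $m(r)\le1$ and applying Jensen's inequality to the convex function $t\mapsto|t|^p$ on $\mathbb{R}^n$, I get $|u(r\zeta)|^p\le m(r)^{p-1}\int_{\mathbb{S}^{n-1}}|f(\eta)|^pP_\alpha(r\zeta,\eta)\,d\sigma(\eta)$; integrating in $\zeta$, using Fubini and $\int P_\alpha(r\zeta,\eta)\,d\sigma(\zeta)=m(r)$, this collapses to $M_p(r,u)\le m(r)\|f\|_{L^p}\le\|f\|_{L^p}$ (the case $p=\infty$ being immediate from positivity). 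Hence $u\in\mathcal H^p$ and $\|u\|_{\mathcal H^p}\le\|f\|_{L^p}$.

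For necessity, assume $u\in\mathcal H^p$ and consider the slices $u_r=u(r\,\cdot)$, a family bounded by $\|u\|_{\mathcal H^p}$ in $L^p(\mathbb{S}^{n-1},\mathbb{R}^n)$. Because $p\in(1,\infty]$, $L^p$ is the dual of the separable space $L^q$, so Theorem~\Rref{Lem-A} yields a sequence $r_j\to1$ and a limit $f\in L^p$ with $u_{r_j}\to f$ weak$^*$ and $\|f\|_{L^p}\le\|u\|_{\mathcal H^p}$ by lower semicontinuity of the norm. For each fixed $x\in\mathbb{B}^n$ the kernel $P_\alpha(x,\cdot)$ is bounded, hence lies in $L^q(\mathbb{S}^{n-1})$, so testing the weak$^*$ convergence against it gives $P_\alpha[f](x)=\lim_j\int_{\mathbb{S}^{n-1}}P_\alpha(x,\eta)u(r_j\eta)\,d\sigma(\eta)$. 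Thus the proof reduces to the single identity $\lim_{r\to1}\int_{\mathbb{S}^{n-1}}P_\alpha(x,\eta)u(r\eta)\,d\sigma(\eta)=u(x)$.

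This last identity, and in particular the uniform control it requires, is the main obstacle. I will expand $u$ by Theorem~\Rref{Lem-B} as $u(x)=\sum_k\Phi_k^\alpha(|x|^2)|x|^k u_k(x')$, an absolutely convergent series, and note that for fixed $r<1$ the series $u(r\eta)=\sum_k\Phi_k^\alpha(r^2)r^k u_k(\eta)$ converges uniformly on $\mathbb{S}^{n-1}$, so it may be integrated term by term against the bounded kernel. Using that the Poisson integral of a degree-$k$ spherical harmonic satisfies $P_\alpha[u_k](x)=\frac{\Phi_k^\alpha(|x|^2)}{\Phi_k^\alpha(1)}|x|^k u_k(x')$ (the series representation of the Dirichlet solution recalled in Section~\ref{sec-1}), the integral becomes $\sum_k\lambda_k(r)\,\Phi_k^\alpha(|x|^2)|x|^k u_k(x')$ with $\lambda_k(r)=\Phi_k^\alpha(r^2)r^k/\Phi_k^\alpha(1)$. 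Each $\lambda_k(r)\to1$ as $r\to1$, since $\Phi_k^\alpha$ is continuous at $1$ and $\Phi_k^\alpha(1)>0$ (both because $1+2\alpha>0$), so a dominated-convergence argument for series delivers the identity once the uniform bound $|\lambda_k(r)|\le1$ is established. This bound I obtain from the maximum principle: $P_\alpha[u_k]$ is $\Delta_\alpha$-harmonic and continuous on $\overline{\mathbb{B}^n}$ with boundary values $u_k$, so Lemma~\ref{lem-2.1} gives $\sup_{\mathbb{B}^n}|P_\alpha[u_k]|\le\max_{\mathbb{S}^{n-1}}|u_k|$; evaluating at $x=rx'$ and taking the supremum over $x'\in\mathbb{S}^{n-1}$ forces $|\lambda_k(r)|\,\max_{\mathbb{S}^{n-1}}|u_k|\le\max_{\mathbb{S}^{n-1}}|u_k|$, that is $|\lambda_k(r)|\le1$. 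This is exactly the place where the restriction on $\alpha$ is indispensable. With $u=P_\alpha[f]$ now proved, combining $\|u\|_{\mathcal H^p}\le\|f\|_{L^p}$ from the sufficiency step with $\|f\|_{L^p}\le\|u\|_{\mathcal H^p}$ yields $\|u\|_{\mathcal H^p}=\|f\|_{L^p}$.
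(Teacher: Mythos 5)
Your proof is correct, and its skeleton coincides with the paper's: sufficiency from a mass bound on the kernel plus a convexity inequality and Fubini; necessity from Theorem~A (weak* compactness), testing the weak* limit against the bounded kernel $P_\alpha(x,\cdot)\in L^q$, expanding $u$ by Theorem~B, integrating term by term via the identity $P_\alpha[u_k](x)=\frac{\Phi_k^{\alpha}(|x|^2)}{\Phi_k^{\alpha}(1)}|x|^k u_k(x')$ (the paper's \eqref{eq-2.7}), and passing to the limit in $j$; the norm identity then follows by combining the two inequalities exactly as the paper combines \eqref{eq-2.3}$\sim$\eqref{eq-2.5}. The differences lie in how the two technical bounds are obtained, and they are worth recording. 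First, the paper gets the kernel mass bound from the explicit hypergeometric evaluation $\int_{\mathbb{S}^{n-1}}P_{\alpha}(r\zeta,\xi)\,d\sigma(\xi)=\Phi_0^{\alpha}(r^2)/\Phi_0^{\alpha}(1)\leq 1$ (its \eqref{eq-2.2}, cited from Liu--Peng and Olofsson), whereas you derive $m(r)\leq 1$ from the maximum principle (Lemma~\ref{lem-2.1}); your route is more self-contained but loses the closed form, which the paper reuses later (for instance in Section~\ref{sec-4}). Your Jensen step with the sub-probability factor $m(r)^{p-1}$ is just a repackaging of the paper's H\"older step. Second, and more substantively, for the interchange $\lim_{j}\sum_{k}=\sum_{k}\lim_{j}$ the paper appeals only to the uniform and absolute convergence furnished by Theorem~B, which is a statement for each fixed $j$ and does not by itself dominate the series uniformly in $j$; your bound $|\lambda_k(r)|\leq 1$, obtained from the maximum principle applied to $P_{\alpha}[u_k]$ (using continuity up to $\mathbb{S}^{n-1}$ from the solvability of the Dirichlet problem), supplies precisely the uniform-in-$j$ domination needed for the series form of dominated convergence. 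So your argument actually tightens the one loose step in the paper's proof, at the cost of invoking the restriction on $\alpha$ in two places, whereas the paper's proof, as written, invokes it only through \eqref{eq-2.2}.
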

\begin{proof}
The proof of this lemma is  based upon the ideas from \cite[Proposition 2]{JP99}.
Assume that $u=P_{\alpha}[f]$  with  $f \in L^{p}\left(\mathbb{S}^{n-1},\mathbb{R}^{n}\right)$ and $p\in (1,\infty]$.
For any $r\in[0,1)$ and $\zeta\in \mathbb{S}^{n-1}$,
it follows from \cite[Equation (2.8)]{Liu04} and \cite[Lemma 1.2]{ol2014} that
 \begin{equation}\label{eq-2.2}
\int_{\mathbb{S}^{n-1}}P_{\alpha}(r\zeta,\xi)d\sigma(\xi)=C_{n,\alpha}\,{_{2}F_{1}}\left(-\alpha, \frac{n}{2}-1-\alpha ; \frac{n}{2} ;r^{2}\right)=\frac{\Phi_{0}^{\alpha}\left(r^{2}\right)}{\Phi_{0}^{\alpha}(1)}\leq 1.
 \end{equation}
If  $p\in (1,\infty)$, then  H\"older's inequality ensures that
\begin{align*}
	|u(r \zeta)|^{p} &\leq   \int_{\mathbb{S}^{n-1}} P_{\alpha}(r \zeta, \xi)|f(\xi)|^{p} d \sigma(\xi).
\end{align*}
 This, together with \eqref{eq-2.2} and Fubini's Theorem, guarantees that
\begin{align}\label{eq-2.3}
	\|u\|_{\mathcal{H}^{p}}^{p}
&=\sup _{0<r<1} \int_{\mathbb{S}^{n-1}}|u(r \zeta)|^{p} d \sigma(\zeta)\\\nonumber
	&\leq   \sup _{0<r<1} \int_{\mathbb{S}^{n-1}}\int_{\mathbb{S}^{n-1}} P_{\alpha}(r \zeta, \xi)|f(\xi)|^{p} d \sigma(\xi)d\sigma(\zeta)\\\nonumber
	&=   \sup _{0<r<1} \int_{\mathbb{S}^{n-1}}|f(\xi)|^{p}\int_{\mathbb{S}^{n-1}}P_{\alpha}(r \zeta, \xi)d\sigma(\zeta)d\sigma(\xi)\\\nonumber
	&\leq   \|f\|^{p}_{L^{p}} .
\end{align}
If $p=\infty$, then \eqref{eq-2.2} implies that
\begin{align}\label{eq-2.4}
	\left|u(r\zeta)\right|&\leq\int_{\mathbb{S}^{n-1}}\left|f(\xi)\right|P_{\alpha}(r\zeta,\xi)d\sigma(\xi)
	 \leq\|f\|_{L^{\infty}}\int_{\mathbb{S}^{n-1}}P_{\alpha}(r\zeta,\xi)d\sigma(\xi)
	 \leq \|f\|_{L^{\infty}} .
\end{align}
Based on the above inequalities, we see that $u\in\mathcal{H}^{p}(\mathbb{B}^{n-1},\mathbb{R}^{n})$
for  $p\in(1,\infty]$.

Conversely, if $u\in\mathcal{H}^{p}(\mathbb{B}^{n}, \mathbb{ R}^{n} )$ with $p\in (1,\infty]$, then the  $L^{p}$-norms of the mapping $\zeta \mapsto u(r \zeta)$  are uniformly bounded w.r.t $r\in[0,1)$. It follows from Theorem A that there exists a sequence  $\{r_{j}\}\subset[0,1)$ such that $\lim_{j \rightarrow \infty} r_{j}=1$
  and a mapping  $f \in L^{p}(\mathbb{S}^{n-1},\mathbb{R}^{n})$  such that $u\left(r_{j}\xi\right)$ converges weak* to   $f(\xi)$ in  $ L^{p}\left(\mathbb{S}^{n-1}\right)$ as $j\rightarrow\infty$.
 For any $r\in[0,1)$ and $\zeta\in \mathbb{S}^{n-1}$,
let $u_{r}(\zeta)=u (r\zeta)$.
  Then \cite[Inequality (6.8)]{ABR} ensures that
  \begin{align}\label{eq-2.5}
\|f\|_{L^{p}}\leq \text{liminf}_{j\rightarrow\infty }\|u_{r_{j}} \|_{L^{p}}\leq
\sup_{r\in[0,1)}\|u_{r} \|_{L^{p}}=\|u \|_{\mathcal{H}^{p}}.
\end{align}
On the other hand, since for each $q\in[1,\infty)$ and $r\zeta\in \mathbb{B}^{n}$,
 $$\int_{\mathbb{S}^{n-1}} |P _{\alpha}(r \zeta, \xi)|^{q}  d \sigma(\xi)<\infty,$$
we obtain from the weak* convergence of $\{u(r_{j}\xi)\}$ that
$$
P_{\alpha}[f](r \zeta)  =\lim _{j \rightarrow \infty} \int_{\mathbb{S}^{n-1}} P_{\alpha}(r \zeta, \xi) u\left(r_{j} \xi\right) d \sigma(\xi).
$$
Further, by Theorem B, we see that there exists a sequence $\{u_{k}\} \subseteq  \mathcal{H}_{k}(\mathbb{S}^{n-1},\mathbb{R}^{n})$ such that
$$u(x)=\sum_{k=0}^{\infty} \Phi_{k}^{\alpha}\left(|x|^{2}\right)|x|^{k} u_{k}\left(x^{\prime}\right)
 $$
with  $x=|x|x'\in \mathbb{B}^{n}$,
and the series $\sum_{k=0}^{\infty} P_{\alpha}(r \zeta, \xi) \Phi_{k}^{\alpha}\left( r_{j} ^{2}\right) r_{j} ^{k} u_{k}\left(\xi\right)$
 converges uniformly and absolutely on every compact  set  in $\mathbb{B}^{n}$.
 Hence,
 \begin{align}\label{eq-2.6}
P_{\alpha}[f](r \zeta)&=\lim _{j \rightarrow \infty} \int_{\mathbb{S}^{n-1}}\sum_{k=0}^{\infty} P_{\alpha}(r \zeta, \xi) \Phi_{k}^{\alpha}\left( r_{j} ^{2}\right) r_{j} ^{k} u_{k}\left(\xi\right)d \sigma(\xi) \\  \nonumber
&=\lim _{j \rightarrow \infty}\sum_{k=0}^{\infty} \Phi_{k}^{\alpha}\left( r_{j} ^{2}\right) r_{j} ^{k} \int_{\mathbb{S}^{n-1}} P_{\alpha}(r \zeta, \xi) u_{k}\left(\xi\right)d \sigma(\xi).
 \end{align}
Because $\{u_{k}\} \subseteq  \mathcal{H}_{k}(\mathbb{S}^{n-1},\mathbb{R}^{n})$, we see that $u_{k}\in C(\mathbb{S}^{n-1}, \mathbb{R}^{n})$.
Then it follows from  \cite[Theorem 2.4]{Liu04} that
 \begin{equation}\label{eq-2.7}
\int_{\mathbb{S}^{n-1}} P_{\alpha}(r \zeta, \xi) u_{k}\left(\xi\right)d \sigma(\xi)
=\frac{\Phi_{k}^{\alpha}\left(|r|^{2}\right)}{\Phi_{k}^{\alpha}(1)}r^{k}u_{k} (\zeta).
 \end{equation}
Since  Theorem B ensures that
the series $\sum_{k=0}^{\infty} \Phi_{k}^{\alpha}\left( r_{j} ^{2}\right) r_{j} ^{k}\frac{\Phi_{k}^{\alpha}\left(|r|^{2}\right)}{\Phi_{k}^{\alpha}(1)}r^{k}u_{k} (\zeta)$
 converges uniformly and absolutely on every compact  set  in $\mathbb{B}^{n}$,
 we obtain from \eqref{eq-2.6} and \eqref{eq-2.7} that
  \begin{align}\label{eq-2.8}
P_{\alpha}[f](r \zeta)
 &=\lim _{j \rightarrow \infty}\sum_{k=0}^{\infty} \Phi_{k}^{\alpha}\left( r_{j} ^{2}\right) r_{j} ^{k}\frac{\Phi_{k}^{\alpha}\left(|r|^{2}\right)}{\Phi_{k}^{\alpha}(1)}r^{k}u_{k} (\zeta)\\  \nonumber
&=\sum_{k=0}^{\infty} \lim _{j \rightarrow \infty}
\frac{\Phi_{k}^{\alpha}\left( r_{j} ^{2}\right)}{\Phi_{k}^{\alpha}(1)} r_{j} ^{k}
\Phi_{k}^{\alpha}\left(|r|^{2}\right)r^{k}u_{k} (\zeta)\\  \nonumber
&=\sum_{k=0}^{\infty}  \Phi_{k}^{\alpha}\left(|r|^{2}\right)r^{k}u_{k} (\zeta).
 \end{align}
Substituting  \eqref{eq-2.1} into \eqref{eq-2.8} gives that
 $$
P_{\alpha}[f](r \zeta) =u  (r\zeta).
$$

Further, by \eqref{eq-2.3}$\sim$\eqref{eq-2.5}, we obtain that
$$\|f\|_{L^{p}} =\|u \|_{\mathcal{H}^{p}} $$
for $p\in(1,\infty]$.
The proof of the lemma is complete.
\end{proof}

\begin{lem}\label{lem-2.3}
	For $\alpha\in(-\frac{1}{2},\infty)$, $q \in[1, \infty)$, $r \in[0,1)$ and $a \in \mathbb{R}$,
	\begin{align*}
		\frac{\partial}{\partial a} \int_{\mathbb{S}^{n-1}} \mid P_{\alpha}\left(r e_{n}, \eta\right) & -\left.a\right|^{q} d \sigma(\eta) \\
		= & \int_{\mathbb{S}^{n-1}} q\left(a-P_{\alpha}\left(r e_{n}, \eta\right)\right)\left|P_{\alpha}\left(r e_{n}, \eta\right)-a\right|^{q-2} d \sigma(\eta) .
	\end{align*}	
\end{lem}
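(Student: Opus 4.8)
The plan is to differentiate under the integral sign and to supply the hypotheses of the dominated convergence theorem by hand. Abbreviate $\Psi(\eta):=P_{\alpha}(re_{n},\eta)$ and set $F(a):=\int_{\mathbb{S}^{n-1}}|\Psi(\eta)-a|^{q}\,d\sigma(\eta)$. Since $r\in[0,1)$ is fixed and $\alpha>-\frac12$, the kernel $\Psi$ is continuous and strictly positive on the compact sphere $\mathbb{S}^{n-1}$, so there is a constant $M=M(n,\alpha,r)$ with $0\le\Psi\le M$. First I would record the pointwise derivative: for fixed $\eta$ with $\Psi(\eta)\ne a$ the chain rule gives
$$\frac{\partial}{\partial a}|\Psi(\eta)-a|^{q}=q\big(a-\Psi(\eta)\big)\,|\Psi(\eta)-a|^{q-2},$$
which is exactly the integrand on the right-hand side of the claim. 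Note that for $q>1$ this expression has modulus $q|\Psi(\eta)-a|^{q-1}\to0$ as $\Psi(\eta)\to a$, so $a\mapsto|\Psi(\eta)-a|^{q}$ is genuinely $C^{1}$; for $q=1$ it is merely Lipschitz and fails to be differentiable only at the single value $a=\Psi(\eta)$.

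Next I would justify interchanging $\partial/\partial a$ with the integral. Fixing $a$ and restricting to increments $|h|\le1$, I would form the difference quotients $h^{-1}\big(|\Psi(\eta)-(a+h)|^{q}-|\Psi(\eta)-a|^{q}\big)$ and dominate them uniformly in $\eta$. For $q>1$ the mean value theorem applied to the $C^{1}$ map $s\mapsto|\Psi(\eta)-s|^{q}$ produces some $\xi$ between $a$ and $a+h$ with
$$\left|\frac{|\Psi(\eta)-(a+h)|^{q}-|\Psi(\eta)-a|^{q}}{h}\right|=q\,|\Psi(\eta)-\xi|^{q-1}\le q\,(M+|a|+1)^{q-1}.$$
For $q=1$ the map $s\mapsto|\Psi(\eta)-s|$ is $1$-Lipschitz, so the difference quotients are bounded by $1$. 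In either case the bound is a constant, hence $\sigma$-integrable over $\mathbb{S}^{n-1}$, whose total mass is $1$.

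Finally I would check $\sigma$-almost-everywhere convergence of the difference quotients to the pointwise derivative. Off the level set $E_{a}:=\{\eta\in\mathbb{S}^{n-1}:\Psi(\eta)=a\}$ the convergence is immediate from the first step, so it remains to see that $\sigma(E_{a})=0$. This is where the only real difficulty lies, and it is sharpest when $q=1$, where the integrand is undefined on $E_{a}$. The point is that $\Psi(\eta)=a$ forces $|re_{n}-\eta|=\big(C_{n,\alpha}(1-r^{2})^{1+2\alpha}/a\big)^{1/(n+2\alpha)}$ to be constant; writing $|re_{n}-\eta|^{2}=1+r^{2}-2r\eta_{n}$ in terms of the last coordinate $\eta_{n}$, for $r\in(0,1)$ this pins $\eta_{n}$ to a single value and so confines $E_{a}$ to a latitude sphere of dimension $n-2$, which is $\sigma$-null. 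The dominated convergence theorem then yields $F'(a)=\int_{\mathbb{S}^{n-1}}q(a-\Psi)|\Psi-a|^{q-2}\,d\sigma$, as claimed; the degenerate case $r=0$, where $\Psi\equiv C_{n,\alpha}$ is constant, is verified directly since there $F(a)=|C_{n,\alpha}-a|^{q}$.
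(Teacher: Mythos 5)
Your proof is correct and is genuinely different from the paper's. The paper argues by cases: for $q\in[2,\infty)$ it notes that both $(r,a,\eta)\mapsto|P_{\alpha}(re_{n},\eta)-a|^{q}$ and its $a$-derivative are jointly continuous on $[0,1)\times\mathbb{R}\times\mathbb{S}^{n-1}$ and interchanges derivative and integral directly, while for $q\in[1,2)$ it records that $\int_{\mathbb{S}^{n-1}}|P_{\alpha}(re_{n},\eta)-a|^{q-1}\,d\sigma(\eta)$ is finite, with an explicit bound, and then invokes an external criterion for differentiation under the integral sign (Kalaj--Pavlovi\'{c}, Proposition 2.4, or Talvila). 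Your argument is unified and self-contained: a single constant dominates all difference quotients (mean value theorem for $q>1$, the $1$-Lipschitz property for $q=1$), $\sigma$ is a finite measure, and dominated convergence finishes once the difference quotients converge $\sigma$-a.e. The real payoff of your route is at $q=1$, where the pointwise derivative fails to exist on the level set $E_{a}=\{\eta:P_{\alpha}(re_{n},\eta)=a\}$; your observation that $E_{a}$ is contained in a latitude sphere $\{\eta:\eta_{n}=\mathrm{const}\}$, hence $\sigma$-null for $r\in(0,1)$, is exactly the point that the paper's citation-based treatment of $q\in[1,2)$ passes over in silence.

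One caveat, which is a defect of the lemma's statement rather than of your method: at the degenerate point $q=1$, $r=0$, $a=C_{n,\alpha}$ one has $F(a)=|C_{n,\alpha}-a|$, which is not differentiable at $a=C_{n,\alpha}$, so the asserted identity fails there and your closing claim that the case $r=0$ is ``verified directly'' is accurate only for $q>1$ (where the derivative $q(a-C_{n,\alpha})|C_{n,\alpha}-a|^{q-2}$ is read as $0$ at $a=C_{n,\alpha}$) and for $q=1$ with $a\ne C_{n,\alpha}$. The paper's own proof suffers from the same degenerate failure without noticing it, and the lemma is only ever applied later with $q\in(1,\infty)$, so nothing downstream is affected; still, you should record the exclusion rather than assert the verification.
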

\begin{proof}
We consider the case when $q \in[1,2)$ and the case when $q \in   [2, \infty)$, separately.

\begin{case}\label{case-2.1} $q\in[1,2)$. \end{case}

For $(r, a) \in[0,1) \times \mathbb{R}$, because
\begin{align*}
	\int_{\mathbb{S}^{n-1}} \frac{\partial}{\partial a} \mid P_{\alpha}\left(r e_{n}, \eta\right) & -\left.a\right|^{q} d \sigma(\eta) \\
	& =\int_{\mathbb{S}^{n-1}} q\left(a-P_{\alpha}\left(r e_{n}, \eta\right)\right)\left|P_{\alpha}\left(r e_{n}, \eta\right)-a\right|^{q-2} d \sigma(\eta)
\end{align*}
and
$$
\int_{\mathbb{S}^{n-1}}\left|P_{\alpha}\left(r e_{n}, \eta\right)-a\right|^{q-1} d \sigma(\eta) \leq C_{n,\alpha}^{q-1}\left(\frac{1+r}{1-r}\right)^{(1+2\alpha)(q-1)}\frac{1}{(1-r)^{(n-2\alpha-2)(q-1)}}+\left|a\right|^{q-1},
$$
then by \cite[Proposition 2.4] {kp} or \cite{TE01}, we obtain that
\begin{align*}
	\frac{\partial}{\partial a} \int_{\mathbb{S}^{n-1}} \mid P_{\alpha}\left(r e_{n}, \eta\right) & -\left.a\right|^{q} d \sigma(\eta) \\
	= & \int_{\mathbb{S}^{n-1}} q\left(a-P_{\alpha}\left(r e_{n}, \eta\right)\right)\left|P_{\alpha}\left(r e_{n}, \eta\right)-a\right|^{q-2} d \sigma(\eta) .
\end{align*}

 \begin{case}\label{case-2.2} $p\in [2,\infty)$. \end{case}

By direct calculations, we have
$$
\frac{\partial}{\partial a}\left|P_{\alpha}\left(r e_{n}, \eta\right)-a\right|^{q}=q\left(a-P_{\alpha}\left(r e_{n}, \eta\right)\right)\left|P_{\alpha}\left(r e_{n}, \eta\right)-a\right|^{q-2} .
$$
Obviously, the mappings

$$(r, a, \eta) \mapsto\left|P_{\alpha}\left(r e_{n}, \eta\right)-a\right|^{q} \quad \text { and } \quad(r, a, \eta) \mapsto \frac{\partial}{\partial a}\left|P_{\alpha}\left(r e_{n}, \eta\right)-a\right|^{q}
$$
are continuous in $[0,1) \times \mathbb{R} \times \mathbb{S}^{n-1}$. Therefore, for any $(r, a) \in[0,1) \times \mathbb{R}$,
$$
\frac{\partial}{\partial a} \int_{\mathbb{S}^{n-1}}\left|P_{\alpha}\left(r e_{n}, \eta\right)-a\right|^{q} d \sigma(\eta)=\int_{\mathbb{S}^{n-1}} \frac{\partial}{\partial a}\left|P_{\alpha}\left(r e_{n}, \eta\right)-a\right|^{q} d \sigma(\eta),
$$
as required. The proof of the lemma is completed.
\end{proof}
For $\alpha\in(-\frac{1}{2},\infty)$, $q \in(1, \infty)$, $r \in(0,1)$ and $a \in \mathbb{R}$, let
\begin{equation}\label{eq-2.9}
	F(r, a)=\int_{\mathbb{S}^{n-1}}\left(P_{\alpha}\left(r e_{n}, \eta\right)-a\right)\left|P_{\alpha}\left(r e_{n}, \eta\right)-a\right|^{q-2} d \sigma(\eta).
\end{equation}
Then we have the following results on $F(r, a)$.

\begin{lem}\label{lem-2.4}
For $\alpha\in(-\frac{1}{2},\infty)$, $q\in(1,\infty)$, $r\in(0,1)$ and $a\in \mathbb{R}$,
\begin{equation}\label{eq-2.10}
\partial_aF(r,a)=(1-q)\int_{\mathbb{S}^{n-1}} |P_\alpha(re_{n},\eta)-a|^{q-2} d\sigma(\eta)
\end{equation}
and
\begin{equation}\label{eq-2.11}
\partial_rF(r,a)=(q-1)\int_{\mathbb{S}^{n-1}} \partial_{ r} P_\alpha(re_{n},\eta)\cdot |P_\alpha(re_{n},\eta)-a|^{q-2} d\sigma(\eta).
\end{equation}
Furthermore, for any $[\mu_{1},\mu_{2}]\subset (0,1)$ and $[\nu_{1},\nu_{2}]\subset (0,\infty)$,
the above two integrals are
uniformly convergent w.r.t. $(r,a)\in[\mu_{1},\mu_{2}]\times [\nu_{1},\nu_{2}]$.

\end{lem}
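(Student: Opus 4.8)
The plan is to differentiate under the integral sign, so that both formulas follow from the chain rule once a single uniform convergence estimate is in hand. Writing $g(t)=t|t|^{q-2}$, a direct computation gives $g'(t)=(q-1)|t|^{q-2}$ for $t\neq 0$. Since $\partial_a\big(P_\alpha(re_n,\eta)-a\big)=-1$ and $\partial_r\big(P_\alpha(re_n,\eta)-a\big)=\partial_rP_\alpha(re_n,\eta)$, the integrand of $F$ has $a$-derivative $(1-q)|P_\alpha(re_n,\eta)-a|^{q-2}$ and $r$-derivative $(q-1)\,\partial_rP_\alpha(re_n,\eta)\,|P_\alpha(re_n,\eta)-a|^{q-2}$, which are exactly the integrands on the right-hand sides of \eqref{eq-2.10} and \eqref{eq-2.11}. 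Hence, once these two integrals are shown to converge uniformly on $[\mu_1,\mu_2]\times[\nu_1,\nu_2]$, the differentiation-under-the-integral criterion already used in Lemma \ref{lem-2.3} (\cite[Proposition 2.4]{kp} or \cite{TE01}) yields \eqref{eq-2.10} and \eqref{eq-2.11}. So the heart of the matter is the stated uniform convergence.

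The only genuine difficulty occurs when $q\in(1,2)$, because then $|P_\alpha(re_n,\eta)-a|^{q-2}$ is singular on the set where $P_\alpha(re_n,\eta)=a$. The key structural observation is that $P_\alpha(re_n,\eta)$ depends on $\eta$ only through $s:=\langle e_n,\eta\rangle$, and
\[
\partial_s P_\alpha(re_n,\eta)=C_{n,\alpha}(n+2\alpha)\,r\,(1-r^2)^{1+2\alpha}\,|re_n-\eta|^{-(n+2\alpha)-2}
\]
is continuous and strictly positive on $[\mu_1,\mu_2]\times[-1,1]$ (here $r\ge\mu_1>0$ and $n+2\alpha>2$), hence bounded below by some constant $m>0$. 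Thus $P_\alpha(re_n,\cdot)$ is strictly monotone in $s$, its level set $\{P_\alpha(re_n,\eta)=a\}$ is a single latitude $\{s=c\}$ (when $a$ lies in the range of $P_\alpha(re_n,\cdot)$; otherwise the integrand is bounded and there is nothing to prove), and $|P_\alpha(re_n,\eta)-a|\ge m\,|s-c|$ near that latitude.

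Using the slicing formula $\int_{\mathbb{S}^{n-1}}F(\langle e_n,\eta\rangle)\,d\sigma(\eta)=c_n\int_{-1}^{1}F(s)(1-s^2)^{(n-3)/2}\,ds$ together with $(1-s^2)^{(n-3)/2}\le 1$ for $n\ge 3$, the singular part of the integral obeys
\[
\int_{\{|P_\alpha(re_n,\cdot)-a|<\varepsilon\}}|P_\alpha(re_n,\eta)-a|^{q-2}\,d\sigma(\eta)\le C\,m^{q-2}\!\!\int_{|s-c|<\varepsilon/m}\!\!|s-c|^{q-2}\,ds,
\]
which tends to $0$ as $\varepsilon\to0$ uniformly in $(r,a)$, since $q-2>-1$ and $m$ is uniform over the compact region; away from the latitude $\{s=c\}$ the integrand is bounded. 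This gives the uniform convergence of \eqref{eq-2.10}, and \eqref{eq-2.11} follows in the same way after inserting the factor $\partial_rP_\alpha(re_n,\eta)$, which is bounded on $[\mu_1,\mu_2]\times[-1,1]$. For $q\ge 2$ both integrands are continuous, so differentiation under the integral is immediate as in Case \ref{case-2.2}. The main obstacle is therefore the singular range $1<q<2$, and it is resolved precisely by the uniform lower bound $m>0$ on $\partial_sP_\alpha$ combined with the boundedness of the latitude density $(1-s^2)^{(n-3)/2}$.
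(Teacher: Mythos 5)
Your proof is correct, and its skeleton --- reduce both formulas to uniform convergence of the differentiated integrals, invoke the criterion of \cite[Proposition 2.4]{kp} or \cite{TE01}, and dispose of $q\in[2,\infty)$ by continuity --- is the same as the paper's. Where you genuinely diverge is in how the singular estimate for $q\in(1,2)$ is produced. The paper first clears the denominator, replacing $|P_\alpha(re_n,\eta)-a|^{q-2}$ by $\bigl|C_{n,\alpha}(1-r^2)^{1+2\alpha}-a(1+r^2-2r\eta_n)^{\frac{n}{2}+\alpha}\bigr|^{q-2}$ up to a constant, splits the sphere into hemispheres to obtain the two functions $\Psi_1,\Psi_2$, computes their zeros $\lambda_1,\lambda_2$ explicitly, and derives the linear lower bound $|\Psi_i|\ge C_1|\lambda_i-x|$ from convexity of $t\mapsto t^{\frac{n}{2}+\alpha}$. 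You instead differentiate the kernel itself in the latitude variable $s=\eta_n$, note that $\partial_s P_\alpha\ge m>0$ uniformly on $[\mu_1,\mu_2]\times[-1,1]$, and get $|P_\alpha(re_n,\eta)-a|\ge m|s-c|$ directly from the mean value theorem. Both routes terminate at the same integrable singularity $|s-c|^{q-2}$ with $q-2>-1$ and a constant uniform over the rectangle; yours is shorter, avoids the hemispherical splitting and the explicit formulas for $\lambda_i$, and makes the geometric mechanism explicit (the level set of the kernel is a single latitude, crossed at uniformly positive speed).

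One point you should tighten: the parenthetical ``otherwise the integrand is bounded and there is nothing to prove'' (when $a$ misses the range of $P_\alpha(re_n,\cdot)$) is not quite sufficient for \emph{uniform} convergence, because the pointwise bound on the integrand blows up as $a$ approaches an endpoint of the range from outside, and such parameters $(r,a)$ do occur in $[\mu_1,\mu_2]\times[\nu_1,\nu_2]$. The repair is already contained in your key estimate: if $a$ exceeds the maximum of the latitude profile $P$, then $|P(s)-a|\ge P(1)-P(s)\ge m(1-s)$, and symmetrically below the minimum, so the bound $|P_\alpha(re_n,\eta)-a|\ge m|s-c|$ persists with $c$ clamped to the endpoint $\pm1$; the $\delta^{q-1}$ estimate then holds uniformly for every $a\in[\nu_1,\nu_2]$, attained or not. (The paper's own dismissal of the cases $\lambda_i>1$ or $\lambda_i<0$ by ``continuity'' has the same small leak, with the same repair via its inequality $|\Psi_i|\ge C_1|\lambda_i-x|$.)
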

\begin{proof}
In order to prove this lemma, we only need to prove \eqref{eq-2.10} and the uniformly convergence of the integral in  \eqref{eq-2.10} since \eqref{eq-2.11}  and the uniformly convergence of the integral in  \eqref{eq-2.11} can be proved in a similar way.
For this, we consider the case when $q\in(1,2)$ and the case when $q\in [2,\infty)$, separately.

\begin{case}\label{case-2.3} $q\in(1,2)$. \end{case}
For fixed $r\in(0,1)$ and $\eta=(\eta_{1},\ldots,\eta_{n})\in\mathbb{S}^{n-1}$,
by calculations, we know that
\begin{align}\label{eq-2.12}
 \left|P_{\alpha}\left(r e_{n}, \eta\right)-a\right|^{q-2}
\leq 4^{\frac{n}{2}+\alpha} \left|C_{n,\alpha}\left(1-r^{2}\right)^{1+2\alpha}-a\left(1+r^{2}-2 r \eta_{n}\right)^{\frac{n}{2}+\alpha}\right|^{q-2}.
\end{align}
Hence, in order to prove the uniformly convergence of
 $\int_{\mathbb{S}^{n-1}}\left|P_{\alpha}\left(r e_{n}, \eta\right)-a\right|^{q-2} d \sigma(\eta) $ w.r.t. $(r,a)\in[\mu_{1},\mu_{2}]\times [\nu_{1},\nu_{2}]$,
we only need to show the uniformly convergence of
 $\int_{\mathbb{S}^{n-1}}\left|C_{n,\alpha}\left(1-r^{2}\right)^{1+2\alpha}-a\left(1+r^{2}-2 r \eta_{n}\right)^{\frac{n}{2}+\alpha}\right|^{q-2}d \sigma(\eta) $  w.r.t. $(r,a)\in[\mu_{1},\mu_{2}]\times [\nu_{1},\nu_{2}]$,
where $[\mu_{1},\mu_{2}]\subset (0,1)$ and $[\nu_{1},\nu_{2}]\subset (0,\infty)$.

If $a \leq 0$, then
\begin{align}\label{eq-2.13}
 \int_{\mathbb{S}^{n-1}}&\left(C_{n,\alpha}\left(1-r^{2}\right)^{1+2\alpha}+|a|\left(1+r^{2}-2 r \eta_{n}\right)^{\frac{n}{2}+\alpha}\right)^{q-2} d \sigma(\eta)\notag \\
	&\leq \left(|C_{n,\alpha}|\left(1-r^{2}\right)^{1+2\alpha}+|a|(1-r)^{n+2\alpha}\right)^{q-2}.
\end{align}
This, together with \eqref{eq-2.12}, guarantees  that the integral  $ \int_{\mathbb{S}^{n-1}}\left|P_{\alpha}\left(r e_{n}, \eta\right)-a\right|^{q-2} d \sigma(\eta) $
is uniformly convergent w.r.t. $(r,a)\in[\mu_{1},\mu_{2}]\times [\nu_{1},\nu_{2}]\subset (0,1)\times(0,\infty)$.

If $a>0$, by using a spherical coordinate transformation (cf. \cite[Section 2.2]{chen2018}), we obtain
\begin{align}\label{eq-2.14}
\int_{\mathbb{S}^{n-1}} &\left|C_{n,\alpha}\left(1-r^{2}\right)^{1+2\alpha}-a\left(1+r^{2}-2 r \eta_{n}\right)^{\frac{n}{2}+\alpha}\right|^{q-2} d \sigma(\eta)\\ \nonumber
	=& \int_{0}^{\frac{\pi}{2}} \sin ^{n-2} \theta\left|C_{n,\alpha}\left(1-r^{2}\right)^{1+2\alpha}-a\left(1+r^{2}-2 r \cos \theta\right)^{\frac{n}{2}+\alpha}\right|^{q-2} d \theta\\ \nonumber
&+ \int_{\frac{\pi}{2}}^{\pi} \sin ^{n-2} \theta\left|C_{n,\alpha}\left(1-r^{2}\right)^{1+2\alpha}-a\left(1+r^{2}-2 r \cos \theta\right)^{\frac{n}{2}+\alpha}\right|^{q-2} d \theta \\ \nonumber
=&\int_{0}^{1}\left(1-x^{2}\right)^{\frac{n-3}{2}} |\Psi_{1}(r,a,x)|^{q-2} d x+\int_{0}^{1}\left(1-x^{2}\right)^{\frac{n-3}{2}} |\Psi_{2}(r,a,x)|^{q-2} d x,
\end{align}
 where
\begin{equation}\label{eq-2.15}
\Psi_{1}(r,a,x)= C_{n,\alpha}\left(1-r^{2}\right)^{1+2\alpha}-a\left(1+r^{2}-2 r x\right)^{\frac{n}{2}+\alpha}
\end{equation}
and
\begin{equation}\label{eq-2.16}
\Psi_{2}(r,a,x)= C_{n,\alpha}\left(1-r^{2}\right)^{1+2\alpha}-a\left(1+r^{2}+2 r x\right)^{\frac{n}{2}+\alpha} .
\end{equation}
For the last  two integrals  in \eqref{eq-2.14}, we have the following two  claims.

\begin{claim}\label{claim-2.1} The integral $\int_{0}^{1}\left(1-x^{2}\right)^{\frac{n-3}{2}} |\Psi_{1}(r,a,x)|^{q-2} d x$ is  uniformly convergent w.r.t. $(r,a)\in[\mu_{1},\mu_{2}]\times [\nu_{1},\nu_{2}]\subset (0,1)\times(0,\infty)$.
\end{claim}
For  $(r,a)\in[\mu_{1},\mu_{2}]\times [\nu_{1},\nu_{2}]$, let $\lambda_{1}\in \mathbb{R}$ satisfying
\begin{equation}\label{eq-2.17}
\Psi_{1}(r,a,\lambda_{1})=0.
\end{equation}
By computation, we see that
\begin{equation}\label{eq-2.18}
	\lambda_{1}=\frac{1+r^{2}}{2 r}-\frac{(1-r^{2})^{\frac{2+4\alpha}{n+2\alpha}}}{2 r} a^{-\frac{2}{n+2\alpha}}C_{n,\alpha}^{\frac{2}{n+2\alpha}} .
\end{equation}

If $\lambda_{1} \in[0,1]$, in order to estimate $ \Psi_{1} $, we let $s(t)=t^{\frac{n}{2}+\alpha}$ with $t\in (0,\infty)$. Obviously,
 \begin{center}
 $s'(t)=(\frac{n}{2}+\alpha)t^{\frac{n}{2}+\alpha-1}>0$ and $s''(t)=(\frac{n}{2}+\alpha)(\frac{n}{2}+\alpha-1)t^{\frac{n}{2}+\alpha-2}>0$
\end{center}
for $n\geq3$ and $\alpha>-\frac{1}{2}$, which implies that $s'(t)$ is an increasing mapping.
Hence, for any $t_{1}>t_{2}>0$,
\begin{equation}\label{eq-2.19}
	\left|s(t_{1})-s(t_{2})\right|\geq s'(t_{2})(t_{1}-t_{2}).
\end{equation}

It follows from \eqref{eq-2.15}, \eqref{eq-2.17} and \eqref{eq-2.19} that
\begin{align}\label{eq-2.20}
|\Psi_{1}(a,r,x)|=&|\Psi_{1}(a,r,x)-\Psi_{1}(a,r,\lambda_{1})|\\ \nonumber
=&a\left| \left(1+r^{2}-2 r x\right)^{\frac{n}{2}+\alpha}- \left(1+r^{2}-2 r \lambda_{1}\right)^{\frac{n}{2}+\alpha} \right|\\ \nonumber
\geq&a r(n+2\alpha)\left(1+r^{2}-2 r \lambda_{1}\right)^{\frac{n}{2}+\alpha-1}( \lambda_{1}-x)
\end{align}
 when $0\leq x\leq\lambda_{1}$. Similarly,
\begin{align}\label{eq-2.21}
|\Psi_{1}(a,r,x)|
\geq a r(n+2\alpha)\left(1+r^{2}-2 r x\right)^{\frac{n}{2}+\alpha-1}(x- \lambda_{1})
\end{align}
 when $ 1\geq x\geq\lambda_{1}$.

For any  $(x,r,a)\in[0,1]\times[\mu_{1},\mu_{2}]\times [\nu_{1},\nu_{2}]$,
it follows from \eqref{eq-2.20} and \eqref{eq-2.21} that
\begin{align}\label{eq-2.22}
|\Psi_{1}(a,r,x)|
\geq a r(n+2\alpha)(1-r)^{n+2\alpha-2}|\lambda_{1}-x|
\geq C_{1}|\lambda_{1}-x|,
\end{align}
where
\begin{align}\label{eq-2.22c}
C_{1}=\mu_{1}\nu_{1}(n+2\alpha)(1-\mu_{2})^{n+2\alpha-2}
\end{align}
 and   $\lambda_{1} \in[0,1]$.

Now, for any $\delta>0$, \eqref{eq-2.22} ensures that
\begin{align*}
	\int_{\lambda_{1}-\delta}^{\lambda_{1}}& \left(1-x^{2}\right)^{\frac{n-3}{2}} |\Psi_{1}(r,a,x)|^{q-2}d x
\leq   C_{1}^{q-2} \int_{\lambda_{1}-\delta}^{\lambda_{1}}\left|x-\lambda_{1}\right|^{q-2} d x
=  C_{1}^{q-2}\delta^{q-1} /(q-1),
\end{align*}
and similarly,
\begin{align*}
	\int_{\lambda_{1}}^{\lambda_{1}+\delta}& \left(1-x^{2}\right)^{\frac{n-3}{2}} |\Psi_{1}(r,a,x)|^{q-2}d x
\leq   C_{1}^{q-2}\delta^{q-1} /(q-1).
\end{align*}
The above  inequalities show that $\int_{0}^{1}\left(1-x^{2}\right)^{\frac{n-3}{2}} |\Psi_{1}(r,a,x)|^{q-2} d x$
is  uniformly convergent w.r.t. $(r,a)\in[\mu_{1},\mu_{2}]\times [\nu_{1},\nu_{2}] $ when $\lambda_{1} \in[0,1]$.

Obviously, if $\lambda_{1}>1$ or $\lambda_{1}<0$, then the mapping  $(x,r,a)\mapsto\left(1-x^{2}\right)^{\frac{n-3}{2}} |\Psi_{1}(r,a,x)|^{q-2} $  is continuous in
$[0,1]\times[\mu_{1},\mu_{2}]\times [\nu_{1},\nu_{2}]$, and so $\int_{0}^{1}\left(1-x^{2}\right)^{\frac{n-3}{2}} |\Psi_{1}(r,a,x)|^{q-2} d x$ is  uniformly convergent w.r.t. $(r,a)\in[\mu_{1},\mu_{2}]\times [\nu_{1},\nu_{2}] $.
 Hence,  Claim \ref{claim-2.1} is proved.

\begin{claim}\label{claim-2.2} The integral $\int_{0}^{1}\left(1-x^{2}\right)^{\frac{n-3}{2}} |\Psi_{2}(r,a,x)|^{q-2} d x$ is  uniformly convergent w.r.t. $(r,a)\in[\mu_{1},\mu_{2}]\times [\nu_{1},\nu_{2}]\subset (0,1)\times(0,\infty)$.
\end{claim}
For  $(r,a)\in[\mu_{1},\mu_{2}]\times [\nu_{1},\nu_{2}]$, let $\lambda_{2}\in \mathbb{R}$ satisfying
\begin{align}\label{eq-2.23}
\Psi_{2}(r,a,\lambda_{2})=0.
\end{align}
Then
\begin{align*}
	\lambda_{2}= \frac{(1-r^{2})^{\frac{2+4\alpha}{n+2\alpha}}}{2 r} a^{-\frac{2}{n+2\alpha}}C_{n,\alpha}^{\frac{2}{n+2\alpha}} -\frac{1+r^{2}}{2 r}.
\end{align*}

If $\lambda_{2}\in[0,1]$, then  \eqref{eq-2.16}, \eqref{eq-2.19} and \eqref{eq-2.23} guarantee that
\begin{align*}
	|\Psi_{2}(a,r,x)|=&|\Psi_{2}(a,r,x)-\Psi_{2}(a,r,\lambda_{2})|\\
	=&a\left| \left(1+r^{2}+2 r x\right)^{\frac{n}{2}+\alpha}- \left(1+r^{2}+2 r \lambda_{2}\right)^{\frac{n}{2}+\alpha}\right|\\
	\geq&a r(n+2\alpha)\left(1+r^{2}+2 r x\right)^{\frac{n}{2}+\alpha-1}( \lambda_{2}-x)
\end{align*}
when $0\leq x\leq\lambda_{2}$, and
\begin{align*}
	|\Psi_{2}(a,r,x)|
	\geq a r(n+2\alpha)\left(1+r^{2}+2 r \lambda_{2}\right)^{\frac{n}{2}+\alpha-1}(x- \lambda_{2})
\end{align*}
when $ 1\geq x\geq\lambda_{2}$.

Therefore, for any  $(x,r,a)\in[0,1]\times[\mu_{1},\mu_{2}]\times [\nu_{1},\nu_{2}]$,
\begin{equation}\label{eq-2.24}
	|\Psi_{2}(a,r,x)|
	\geq a r(n+2\alpha)(1-r)^{n+2\alpha-2}|\lambda_{2}-x|
	\geq C_{1}|\lambda_{2}-x|,
\end{equation}
where $C_{1} $ is the constant given by \eqref{eq-2.22c}  and $\lambda_{2}\in[0,1]$.

Now, for any $\delta>0$, \eqref{eq-2.24} ensures that
\begin{align*}
	\int_{\lambda_{2}-\delta}^{\lambda_{2}}& \left(1-x^{2}\right)^{\frac{n-3}{2}} |\Psi_{2}(r,a,x)|^{q-2}d x
	\leq   C_{1}^{q-2} \int_{\lambda_{2}-\delta}^{\lambda_{2}}\left|x-\lambda_{2}\right|^{q-2} d x
	=  C_{1}^{q-2}\delta^{q-1} /(q-1),
\end{align*}
and similarly,
\begin{align*}
	\int_{\lambda_{2}}^{\lambda_{2}+\delta}& \left(1-x^{2}\right)^{\frac{n-3}{2}} |\Psi_{2}(r,a,x)|^{q-2}d x
	\leq   C_{1}^{q-2}\delta^{q-1} /(q-1).
\end{align*}
The above   inequalities show that $\int_{0}^{1}\left(1-x^{2}\right)^{\frac{n-3}{2}} |\Psi_{2}(r,a,x)|^{q-2} d x$
is  uniformly convergent w.r.t. $(r,a)\in[\mu_{1},\mu_{2}]\times [\nu_{1},\nu_{2}] $ when $\lambda_{2} \in[0,1]$.

Obviously, if $\lambda_{2}>1$ or $\lambda_{2}<0$, then the mapping $$(x,r,a)\mapsto\left(1-x^{2}\right)^{\frac{n-3}{2}} |\Psi_{2}(r,a,x)|^{q-2} $$ is continuous in
$[0,1]\times[\mu_{1},\mu_{2}]\times [\nu_{1},\nu_{2}]$, and so $\int_{0}^{1}\left(1-x^{2}\right)^{\frac{n-3}{2}} |\Psi_{2}(r,a,x)|^{q-2} d x$ is  uniformly convergent w.r.t. $(r,a)\in[\mu_{1},\mu_{2}]\times [\nu_{1},\nu_{2}] $.
Hence,  Claim \ref{claim-2.2} is proved.

\medskip

In the following, we will prove  that \eqref{eq-2.10} holds true.
For any $(r,a)\in[\mu_{1},\mu_{2}]\times [\nu_{1},\nu_{2}]\subset (0,1)\times(0,\infty)$,
 by  \eqref{eq-2.22}, we get that
\begin{align}\label{eq-2.25}
	\int_{0}^{1} \left(1-x^{2}\right)^{\frac{n-3}{2}}|\Psi_{1}(r,a,x)|^{q-2} d x
    \leq& C^{q-2}_{1} \int_{0}^{1}\frac{1}{|x-\lambda_{1}|^{2-q}}dx \\ \nonumber
    =&C^{q-2}_{1}
    \frac{\lambda_{1}^{q-1}+(1-\lambda_{1})^{q-1}}{q-1}
\end{align}
when $\lambda_{1}\in[0,1]$. Similarly, by \eqref{eq-2.24}, we obtain that
\begin{align}\label{eq-2.26}
	\int_{0}^{1} \left(1-x^{2}\right)^{\frac{n-3}{2}}|\Psi_{2}(r,a,x)|^{q-2} d x
	\leq&C^{q-2}_{1} \int_{0}^{1}\frac{1}{|x-\lambda_{2}|^{2-q}}dx \\ \nonumber
	=&C^{q-2}_{1} \frac{\lambda_{2}^{q-1}+(1-\lambda_{2})^{q-1}}{q-1}
\end{align}
when $\lambda_{2}\in[0,1]$.

If $\lambda_{1}>1$ or $\lambda_{1}<0$, then the continuity and boundedness of the mapping
$ (x,r,a)\mapsto\left(1-x^{2}\right)^{\frac{n-3}{2}} |\Psi_{1}(r,a,x)|^{q-2}$
in $ [0,1]\times[\mu_{1},\mu_{2}]\times [\nu_{1},\nu_{2}]$
follow from \eqref{eq-2.17}.
Similarly, if $\lambda_{2}>1$ or $\lambda_{2}<0$, then the continuity and boundedness of the mapping
$ (x,r,a)\mapsto\left(1-x^{2}\right)^{\frac{n-3}{2}} |\Psi_{2}(r,a,x)|^{q-2}$
in $ [0,1]\times[\mu_{1},\mu_{2}]\times [\nu_{1},\nu_{2}]$
follow from \eqref{eq-2.23}.

By (\ref{eq-2.25}), (\ref{eq-2.26}) and above discussion, we know that
$$
\int_{0}^{1} \left(1-x^{2}\right)^{\frac{n-3}{2}}\left(|\Psi_{1}(r,a,x)|^{q-2} +|\Psi_{2}(r,a,x)|^{q-2}\right)d x<\infty 
$$
for any $(r,a)\in[\mu_{1},\mu_{2}]\times [\nu_{1},\nu_{2}]\subset (0,1)\times(0,\infty)$.
From this and \eqref{eq-2.12}$\sim$\eqref{eq-2.14},
we see that for any $(r,a)\in[\mu_{1},\mu_{2}]\times [\nu_{1},\nu_{2}]\subset (0,1)\times \mathbb{R}$,
$$\int_{\mathbb{S}^{n-1}} |P_\alpha(re_{n},\eta)-a|^{q-2} d\sigma(\eta)<\infty.$$
This, together with  \cite[Proposition 2.4] {kp} or \cite{TE01}, we see that (\ref{eq-2.10}) is true.

 \begin{case}\label{case-2.4} $p\in [2,\infty)$. \end{case}

By direct calculations, we have
$$
\frac{\partial}{\partial a}\left(P_{\alpha}\left(r e_{n}, \eta\right)-a\right)\left|P_{\alpha}\left(r e_{n}, \eta\right)-a\right|^{q-2}=(1-q)\left|P_{\alpha}\left(r e_{n}, \eta\right)-a\right|^{q-2}.
$$
Obviously, the mappings
$$
(r, a, \eta) \mapsto\left(P_{\alpha}\left(r e_{n}, \eta\right)-a\right)\left|P_{\alpha}\left(r e_{n}, \eta\right)-a\right|^{q-2}
$$
and
$$
(r, a, \eta) \mapsto(1-q)\left|P_{\alpha}\left(r e_{n}, \eta\right)-a\right|^{q-2}
$$
are continuous in $(0,1) \times \mathbb{R} \times \mathbb{S}^{n-1}$. Therefore, for any $(r, a) \in(0,1) \times \mathbb{R}$, \eqref{eq-2.10} is true and $\partial_{a} F(r, a)$ is uniformly convergent w.r.t. $(r, a) \in\left[\mu_{1}, \mu_{2}\right] \times\left[v_{1}, v_{2}\right]$ for any $\left[\mu_{1}, \mu_{2}\right] \subset(0,1)$ and $\left[v_{1}, v_{2}\right] \subset(0, \infty)$. The proof of the lemma is completed.
\end{proof}
\begin{lem}\label{lem-2.5}
For  $\alpha\in(-\frac{1}{2},\infty)$  and $q\in(1,\infty)$, both $\partial_aF(r,a)$ and $\partial_rF(r,a)$ are continuous
w.r.t. $(r,a)\in(0,1)\times(0,\infty)$.
\end{lem}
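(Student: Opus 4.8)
The plan is to exploit locality: continuity is a local property, so it suffices to fix an arbitrary rectangle $K=[\mu_{1},\mu_{2}]\times[\nu_{1},\nu_{2}]\subset(0,1)\times(0,\infty)$ and show that the integral representations \eqref{eq-2.10} and \eqref{eq-2.11} furnished by Lemma \ref{lem-2.4} define continuous functions of $(r,a)$ on $K$. I would split into the two ranges $q\in[2,\infty)$ and $q\in(1,2)$, mirroring the structure of the proof of Lemma \ref{lem-2.4}.

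For $q\in[2,\infty)$ the argument is immediate. Since $q-2\ge0$, the map $(r,a,\eta)\mapsto|P_{\alpha}(re_{n},\eta)-a|^{q-2}$ is jointly continuous on $K\times\mathbb{S}^{n-1}$, and on this set the factor $\partial_{r}P_{\alpha}(re_{n},\eta)$ occurring in \eqref{eq-2.11} is continuous as well because the denominator $1+r^{2}-2r\langle\eta,e_{n}\rangle\ge(1-\mu_{2})^{2}>0$ is bounded away from zero. Both integrands are therefore bounded on the compact product $K\times\mathbb{S}^{n-1}$, and continuity of the integrals in $(r,a)$ follows from the dominated convergence theorem.

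The case $q\in(1,2)$ is the substantive one, and the singularity of $|P_{\alpha}-a|^{q-2}$ along the moving level set $\{\eta:P_{\alpha}(re_{n},\eta)=a\}$ is the main obstacle. Here I would reuse the machinery already developed in Lemma \ref{lem-2.4}: in spherical coordinates each integral reduces to a sum of integrals $\int_{0}^{1}(1-x^{2})^{(n-3)/2}|\Psi_{i}(r,a,x)|^{q-2}\,dx$ for $i=1,2$, where $\Psi_{i}$ is continuous in $(r,a,x)$, vanishes only at $x=\lambda_{i}(r,a)$, and $\lambda_{i}(r,a)$ depends continuously on $(r,a)$ through the explicit formula \eqref{eq-2.18}. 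Given $(r_{k},a_{k})\to(r_{0},a_{0})$ in $K$, I would run an $\varepsilon/3$ argument. The linear lower bounds $|\Psi_{i}(r,a,x)|\ge C_{1}|x-\lambda_{i}(r,a)|$ from \eqref{eq-2.22} and \eqref{eq-2.24}, together with $q-2>-1$, bound the integral over any $\delta$-neighborhood of $\lambda_{i}$ by $2C_{1}^{q-2}\delta^{q-1}/(q-1)$, uniformly over $K$; this simultaneously controls the contribution near the singularity for every $k$ and for the limit. On the complement of a $2\delta$-neighborhood of $\lambda_{i}(r_{0},a_{0})$ the integrands are continuous and, for $k$ large, uniformly bounded, so that part of the integral converges to its value at $(r_{0},a_{0})$. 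Inserting the bounded continuous factor $\partial_{r}P_{\alpha}(re_{n},\eta)$ changes nothing in this analysis, so the same reasoning yields the continuity of $\partial_{r}F$.

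The step I expect to require the most care is the passage to the limit on the part away from the singularity. One must observe that, since $\lambda_{i}(r_{k},a_{k})\to\lambda_{i}(r_{0},a_{0})$, for all large $k$ the singular point $\lambda_{i}(r_{k},a_{k})$ lies within $\delta$ of $\lambda_{i}(r_{0},a_{0})$; hence on the complement of a fixed $2\delta$-neighborhood of $\lambda_{i}(r_{0},a_{0})$ every $\Psi_{i}(r_{k},a_{k},\cdot)$ is bounded away from zero uniformly in $k$, the integrands $|\Psi_{i}(r_{k},a_{k},\cdot)|^{q-2}$ are uniformly bounded and converge uniformly by continuity on a compact set, and the mismatch between excising a $2\delta$- versus a $\delta$-neighborhood is itself absorbed by the uniform estimate above. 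Once this bookkeeping is arranged, combining the three estimates gives $\partial_{a}F(r_{k},a_{k})\to\partial_{a}F(r_{0},a_{0})$ and likewise for $\partial_{r}F$, completing the proof.
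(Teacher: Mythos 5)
Your proposal is correct and follows essentially the same route as the paper: both split into $q\in[2,\infty)$ (where joint continuity of the integrand on compacts suffices) and $q\in(1,2)$, and in the latter case both pass to spherical coordinates, excise a small neighborhood of the moving zero $\lambda_{i}(r,a)$ of $\Psi_{i}$, control that piece uniformly via the linear lower bounds $|\Psi_{i}|\ge C_{1}|x-\lambda_{i}|$ (equivalently, the uniform convergence from Lemma~\ref{lem-2.4}), and use continuity of $\lambda_{i}$ together with continuity of the integrand away from the singularity to handle the remaining part by an $\varepsilon/3$ argument. The only cosmetic difference is that the paper carries along the harmless continuous factor $(1+r^{2}\mp 2rx)^{(\frac{n}{2}+\alpha)(2-q)}$ in its integrands $J_{i}$ and treats the case $\lambda_{i}\notin[0,1]$ separately, neither of which changes the substance.
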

\begin{proof}
In order to prove this lemma, we only need to prove the continuity of $\partial_aF(r,a)$ since the continuity of $\partial_rF(r,a)$ can be proved in a similar way.
For this, we consider the case when $q\in(1,2)$ and the case when $q\in [2,\infty)$, separately.

\begin{case}\label{case-2.5} $q\in(1,2)$. \end{case}


By using a spherical coordinate transformation, we obtain from  \eqref{eq-2.10}  that
\begin{align*}
-\partial_{a}  F(r, a)
&=(q-1) \int_{\mathbb{S}^{n-1}}\left|P_{\alpha}\left(r e_{n}, \eta\right)-a\right|^{q-2} d \sigma(\eta)
 =\int_{0}^{1} \big(J_{1}(x,r, a)+J_{2}(x,r, a)\big)dx,
\end{align*}
where
\begin{align}\label{eq-2.27}
	J_{1}(x,r, a)   =  (q-1)\left(1-x^{2}\right)^{\frac{n-3}{2}}  \left(1+r^{2}-2 r x\right)^{(\frac{n}{2}+\alpha)(2-q)} \left|\Psi_{1}(r,a,x)\right|^{q-2}
\end{align}
and
\begin{align*}
	J_{2}(x,r, a)   =  (q-1)\left(1-x^{2}\right)^{\frac{n-3}{2}}  \left(1+r^{2}+2 r x\right)^{(\frac{n}{2}+\alpha)(2-q)}  \left|\Psi_{2}(r,a,x)\right|^{q-2} .
\end{align*}
Here $\Psi_{1}$ and $\Psi_{2}$ are  the mappings defined in \eqref{eq-2.15} and \eqref{eq-2.16}, respectively.

In order to prove the continuity of  $\partial_{a} F(r, a)$, we only need to prove the continuity of the mapping
 $(r,a)\mapsto\int_{0}^{1}J_{1}(x,r, a)dx$ since the continuity of  the mapping
 $(r,a)\mapsto\int_{0}^{1}J_{2}(x,r, a)dx$ can be proved in a similar way.

Let $\lambda_{1}=\lambda_{1} (r, a )$ be given by \eqref{eq-2.18},
where $\lambda_{1}=\lambda_{1}(r,a)$ means that $\lambda_{1}$ depends only on $r$ and $a$.
Obviously, if  $\lambda_{1}>1$ or $\lambda_{1}<0$, then \eqref{eq-2.27} and the continuity  of
$  \Psi_{1} ^{q-2}$ w.r.t.  $(x,r,a)\in [0,1]\times[\mu_{1},\mu_{2}]\times [\nu_{1},\nu_{2}]$ guarantee
the continuity of $\int_{0}^{1}J_{1}(x,r, a)dx$ w.r.t. $(r,a)\in [\mu_{1},\mu_{2}]\times [\nu_{1},\nu_{2}]$ for any $[\mu_{1},\mu_{2}]\subset (0,1)$ and $[\nu_{1},\nu_{2}]\subset (0,\infty)$.
 Therefore, $\int_{0}^{1}J_{1}(x,r, a)dx$ is continuous
w.r.t. $(r,a)\in(0,1)\times(0,\infty)$.

If $\lambda_{1}\in[0,1]$,  we only need to prove that $\int_{0}^{1}J_{1}(x,r, a)dx$ is continuous at every fixed point
$(r_{0},a_{0})\in(0,1)\times(0,\infty)$.
Assume that
$(r_{0},a_{0})\in(\mu_{1},\mu_{2})\times (\nu_{1},\nu_{2})
\subset (0,1)\times(0,\infty)$
and $(r_{0}+\Delta r,a_{0}+\Delta a)
\in(\mu_{1},\mu_{2})\times (\nu_{1},\nu_{2})$.

By Claim \ref{claim-2.1} and  \eqref{eq-2.27}, we know that $\int_{0}^{1}J_{1}(x,r, a)dx$ is uniformly convergent w.r.t. $(r,a)\in \left[\mu_{1}, \mu_{2}\right] \times   \left[\nu_{1}, \nu_{2}\right]$  for any  $\left[\mu_{1}, \mu_{2}\right] \subset(0,1)$  and  $\left[\nu_{1}, \nu_{2}\right] \subset(0, \infty)$. Without loss of generality, we assume that  $\lambda_{1}\left(r_{0}, a_{0}\right) \in(0,1)$. Then for any $\varepsilon_{1}>0$, there exist constants  $\iota_{1}=\iota_{1}\left(\varepsilon_{1}\right) \rightarrow 0^{+}$ and  $\iota_{2}=\iota_{2}\left(\varepsilon_{1}\right) \rightarrow 0^{+}$ such that for any $(r, a) \in\left[\mu_{1}, \mu_{2}\right] \times\left[\nu_{1}, \nu_{2}\right] $,
$$
\left|\int_{\lambda_{1}\left(r_{0}, a_{0}\right)-\iota_{1}}^{\lambda_{1}\left(r_{0}, a_{0}\right)+\iota_{2}}
 J_{1}(x,r, a)d x\right|<\varepsilon_{1} .
$$
Since
\begin{align*}
  \left|\int_{0}^{1} \right.& J_{1} (x,r_{0}+ \Delta r, a_{0}+\Delta a )dx - \left. \int_{0}^{1}  J_{1} (x,r_{0}, a_{0} ) dx \right| \\ \nonumber
=&\left| \int_{0}^{\lambda_{1} (r_{0}, a_{0} )-\iota_{1}} \big(J_{1} (x,r_{0}+ \Delta r, a_{0}+\Delta a )- J_{1} (x,r_{0}, a_{0} ) \big)\right. d x \\ \nonumber
	& +\int_{\lambda_{1} (r_{0}, a_{0} )-\iota_{1}}^{\lambda_{1} (r_{0}, a_{0} )+\iota_{2}}\big(J_{1} (x,r_{0}+ \Delta r, a_{0}+\Delta a )- J_{1} (x,r_{0}, a_{0} ) \big) d x \\ \nonumber
	& \left.+\int_{\lambda_{1} (r_{0}, a_{0} )+\iota_{2}}^{1}\big(J_{1} (x,r_{0}+ \Delta r, a_{0}+\Delta a )- J_{1} (x,r_{0}, a_{0} ) \big) d x \right|,
\end{align*}
we obtain that
\begin{align}\label{eq-2.28}
  \left|\int_{0}^{1} \right.& J_{1} (x,r_{0}+ \Delta r, a_{0}+\Delta a )dx - \left. \int_{0}^{1}  J_{1} (x,r_{0}, a_{0} ) dx \right| \\ \nonumber
\leq&\;\; \left|  \int_{0}^{\lambda_{1} (r_{0}, a_{0} )-\iota_{1}}\big(J_{1} (x,r_{0}+ \Delta r, a_{0}+\Delta a )- J_{1} (x,r_{0}, a_{0} ) \big) \right.d x \\ \nonumber
	&\left. +\int_{\lambda_{1} (r_{0}, a_{0} )+\iota_{2}}^{1}\big(J_{1} (x,r_{0}+ \Delta r, a_{0}+\Delta a )- J_{1} (x,r_{0}, a_{0} ) \big) d x  \right| +2\varepsilon_{1} .
\end{align}

By \eqref{eq-2.18}, it is easy to see that $\lambda_{1}(r, a)$  is uniformly continuous in $\left[\mu_{1}, \mu_{2}\right] \times   \left[\nu_{1}, \nu_{2}\right]$. Then for any $\iota^{\prime} \in\left(0, \min \left\{\iota_{1}, \iota_{2}\right\}\right)$, there exists a constant  $\iota_{3}=\iota_{3}\left(\iota^{\prime}\right) \rightarrow   0^{+}$ such that for any $(r, a) \in\left[r_{0}-\iota_{3}, r_{0}+\iota_{3}\right] \times\left[a_{0}-\iota_{3}, a_{0}+\iota_{3}\right] \subset\left[\mu_{1}, \mu_{2}\right] \times   \left[\nu_{1}, \nu_{2}\right]
$,
$$
\lambda_{1}(r, a) \in\left(\lambda_{1}\left(r_{0}, a_{0}\right)-\frac{1}{2} \iota^{\prime}, \lambda_{1}\left(r_{0}, a_{0}\right)+\frac{1}{2} \iota^{\prime}\right) \subset\left(\lambda_{1}\left(r_{0}, a_{0}\right)-\iota_{1}, \lambda_{1}\left(r_{0}, a_{0}\right)+\iota_{2}\right) .
$$
This, together with \eqref{eq-2.17} and \eqref{eq-2.27}, implies that the mapping  $  J_{1}(r, a,x)$  is continuous (also uniformly continuous) in
$$
\left[r_{0}-\iota_{3}, r_{0}+\iota_{3}\right] \times\left[a_{0}-\iota_{3}, a_{0}+\iota_{3}\right] \times\left[0, \lambda_{1}\left(r_{0}, a_{0}\right)-\iota_{1}\right]
$$
and
$$
\left[r_{0}-\iota_{3}, r_{0}+\iota_{3}\right] \times\left[a_{0}-\iota_{3}, a_{0}+\iota_{3}\right] \times\left[\lambda_{1}\left(r_{0}, a_{0}\right)+\iota_{2}, 1\right],
$$
respectively. Therefore, there exists $\iota_{4}=\iota_{4}\left(\varepsilon_{1}\right) \leq \iota_{3}$  such that for all $|\Delta r|<\iota_{4}$,  $|\Delta a|<\iota_{4}$ and for all $x \in\left[0, \lambda_{1}\left(r_{0}, a_{0}\right)-\iota_{1}\right] \cup\left[\lambda_{1}\left(r_{0}, a_{0}\right)+\iota_{2}, 1\right]$,
\begin{equation}\label{eq-2.29}
	\left|J_{1}(r_{0}+\Delta r, a_{0}+\Delta a, x)-J_{1}(r_{0}, a_{0} ,x)\right|<\varepsilon_{1} .
\end{equation}
Then by \eqref{eq-2.28} and \eqref{eq-2.29}, we see that
$$
 \left|\int_{0}^{1}  J_{1} (x,r_{0}+ \Delta r, a_{0}+\Delta a )dx -   \int_{0}^{1}  J_{1} (x,r_{0}, a_{0} ) dx \right| \leq 3 \varepsilon_{1},
$$
which means that the mapping $(r,a)\mapsto\int_{0}^{1}J_{1}(x,r, a)dx$  is continuous at  $\left(r_{0}, a_{0}\right)$.

\begin{case}\label{case-2.6} $q\in[2,\infty)$. \end{case}

Obviously, the mapping
$$
(r, a, \eta) \mapsto(1-q)\left|P_{\alpha}\left(r e_{n}, \eta\right)-a\right|^{q-2}
$$
is continuous in $(0,1) \times(0, \infty) \times \mathbb{S}^{n-1}$. By \eqref{eq-2.10}, we know that  $\partial_{a} F(r, a)$ is continuous w.r.t. $(r, a) \in(0,1) \times(0, \infty)$.
\end{proof}

For $q\in[1,\infty)$, $r\in[0,1)$ and $a\in \mathbb{R}$, define
\begin{equation}\label{eq-2.30}
 \Phi_{q }(r,a)=\left(\int_{\mathbb{S}^{n-1}} |P_\alpha(re_{n},\eta)-a|^qd\sigma(\eta)\right)^{1/q}.
\end{equation}
 Then by Lemmas \ref{lem-2.3}$\sim$\ref{lem-2.5}, we obtain the following result for $\Phi_{q }(r,a)$.
\begin{lem}\label{lem-2.6}
For $\alpha\in(-\frac{1}{2},\infty)$, $q\in(1,\infty)$ and $r\in[0,1)$,
there exists a unique constant $a^{*}=a(r)\in(0,\infty)$ such that $$\Phi_{q }(r,a^*)=\min_{a\in \mathbb{R}}\Phi_{q }(r,a),$$
where $a(0)=C_{n,\alpha}$ and
$\frac{d}{dr}a(r)$ is continuous in $(0,1)$.
\end{lem}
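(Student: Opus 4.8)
The plan is to reduce the minimization in $a$ to locating the unique zero of the function $F(r,a)$ from \eqref{eq-2.9}, and then to differentiate that zero in $r$ via the implicit function theorem. Write $g(r,a)=\Phi_q(r,a)^q=\int_{\mathbb{S}^{n-1}}|P_\alpha(re_n,\eta)-a|^q\,d\sigma(\eta)$; since $t\mapsto t^{1/q}$ is strictly increasing on $[0,\infty)$, minimizing $\Phi_q(r,\cdot)$ is equivalent to minimizing $g(r,\cdot)$. By Lemma \ref{lem-2.3} the derivative $\partial_a g(r,a)$ exists and equals $-q\,F(r,a)$, so the critical points of $g(r,\cdot)$ are exactly the zeros of $F(r,\cdot)$.

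First I would establish existence, uniqueness and positivity of the minimizer. For $r\in(0,1)$, Lemma \ref{lem-2.4} gives $\partial_a F(r,a)=(1-q)\int_{\mathbb{S}^{n-1}}|P_\alpha(re_n,\eta)-a|^{q-2}\,d\sigma(\eta)<0$, because $q>1$ and the integral is positive and finite; hence $\partial_a^2 g=-q\,\partial_a F>0$, so $g(r,\cdot)$ is strictly convex and $F(r,\cdot)$ is strictly decreasing on $(0,\infty)$. Since $\alpha>-\frac{1}{2}$ forces $C_{n,\alpha}>0$ and $(1-r^2)^{1+2\alpha}>0$, we have $P_\alpha>0$, so for every $a\le 0$ the integrand $(P_\alpha-a)|P_\alpha-a|^{q-2}$ is positive and thus $F(r,a)>0$; in particular $F(r,0)=\int_{\mathbb{S}^{n-1}}P_\alpha(re_n,\eta)^{q-1}\,d\sigma(\eta)>0$. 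On the other hand, for $a$ large enough that $a>\max_\eta P_\alpha(re_n,\eta)$ the integrand equals $-(a-P_\alpha)^{q-1}$, whence $F(r,a)\to-\infty$ as $a\to+\infty$. Combining strict monotonicity on $(0,\infty)$ with $F(r,0)>0$ and $F(r,a)\to-\infty$ yields a unique zero $a^{*}=a(r)\in(0,\infty)$, which by strict convexity of $g(r,\cdot)$ is its unique global minimizer and hence the unique minimizer of $\Phi_q(r,\cdot)$. The identity $a(0)=C_{n,\alpha}$ is immediate: at $r=0$ one has $P_\alpha(0,\eta)=C_{n,\alpha}$ for all $\eta$, so $g(0,a)=|C_{n,\alpha}-a|^q$ is minimized precisely at $a=C_{n,\alpha}$.

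Finally, for the regularity of $a(r)$ I would apply the implicit function theorem to $F(r,a)=0$. Lemma \ref{lem-2.5} ensures that $\partial_a F$ and $\partial_r F$ are continuous on $(0,1)\times(0,\infty)$, while Lemma \ref{lem-2.4} guarantees $\partial_a F(r,a)<0$, so the hypotheses hold at each point $(r_0,a(r_0))$ with $r_0\in(0,1)$. The theorem produces a $C^1$ local solution of $F(r,a)=0$; by the uniqueness of the zero established above, this local solution coincides with $a(r)$, so $a\in C^1(0,1)$ and
\[
\frac{d}{dr}a(r)=-\frac{\partial_r F(r,a(r))}{\partial_a F(r,a(r))},
\]
which is continuous on $(0,1)$ since numerator and denominator are continuous and the denominator is nonzero.

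The genuine difficulty is the convergence and continuity of the integrals defining $\partial_a F$ and $\partial_r F$ when $q\in(1,2)$, where the factor $|P_\alpha-a|^{q-2}$ is singular along $\{P_\alpha(re_n,\eta)=a\}$; however, that obstacle has already been absorbed into Lemmas \ref{lem-2.4} and \ref{lem-2.5}, so here it only remains to assemble the convexity and monotonicity argument and to verify that the local implicit-function solution matches the global unique minimizer.
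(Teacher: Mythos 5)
Your proposal is correct and follows essentially the same route as the paper's proof: reduce the minimization to the critical-point equation $F(r,a)=0$, show the root in $(0,\infty)$ is unique and is the global minimizer (using positivity of $F$ for $a\le 0$ and its decay to $-\infty$ as $a\to\infty$), and then get continuity of $\frac{d}{dr}a(r)$ from the implicit function theorem combined with Lemmas \ref{lem-2.4} and \ref{lem-2.5}. The only minor difference is that the paper obtains strict convexity of $\Phi_{q}(r,\cdot)$ directly from Minkowski's inequality, whereas you obtain convexity of $\Phi_{q}^{q}$ from the sign of $\partial_{a}F$ supplied by Lemma \ref{lem-2.4}; both serve the same purpose and your handling of the matching between the local implicit-function solution and the global minimizer is, if anything, slightly more careful than the paper's.
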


\begin{proof}
When $r=0$, it follows from \eqref{eq-1.4} and \eqref{eq-2.30} that $a^{*}=C_{n,\alpha}$.
Hence, to prove the lemma, it remains to consider the case when $r\in(0,1)$.

For $r\in(0,1)$, $t\in(0,1)$ and $b,c\in\mathbb{R}$ with $b\not=c$, by the Minkowski inequality, we obtain
$$\Phi_{q}\big(r,t b+(1-t)c\big)
<t \Phi_{q} (r,b)+(1-t) \Phi_{q} (r,c),$$
which means that $\Phi_{q} (r,a)$ is strictly convex w.r.t. $a\in\mathbb{R}$.
Furthermore, by Lemma \ref{lem-2.3}, we know that for $q\in(1,\infty)$, $r\in(0,1)$ and $a\in \mathbb{R}$,
\begin{equation}\label{eq-2.31}
\frac{\partial}{\partial a}\Phi_{q} (r,a)
=-\left(\int_{\mathbb{S}^{n-1}} |P_\alpha(re_{n},\eta)-a|^{q}d\sigma(\eta)\right)^{1/q-1}F(r,a),
\end{equation}
where $F(r,a)$ is the mapping from \eqref{eq-2.9}.
Therefore,
$$
\frac{\partial}{\partial a}\Phi_{q}(r,0)=-\int_{\mathbb{S}^{n-1}} |P_\alpha(re_{n},\eta)|^{q-1}d\sigma(\eta)
\left(\int_{\mathbb{S}^{n-1}}  |P_\alpha(re_{n},\eta)|^{q}d\sigma(\eta)\right)^{1/q-1}<0.
$$
These, together with the fact $\lim_{a\rightarrow\infty}\Phi_{q}(r,a)=\infty$,
show that for any  fix $r\in(0,1),$
the mapping $\Phi_{q,r}(a)=:\Phi_{q}(r,a)$ has only one stationary point in $(0,\infty)$ which is its minimum, i.e., $a^{*}=a(r)$.

By \eqref{eq-2.31}, we see that for any $r\in(0,1)$,
$\frac{\partial}{ \partial a}\Phi_{q }\big(r,a(r)\big)=0$ is equivalent to
\begin{equation}\label{eq-2.32}
F\big(r,a(r)\big)=0.
\end{equation}
Furthermore, Lemmas \ref{lem-2.4} and \ref{lem-2.5} tell us that both $\partial_aF(r,a)$ and $\partial_rF(r,a)$ are continuous w.r.t. $(r,a)\in(0,1)\times (0,\infty)$ and that $\partial_aF(r,a)<0$.
Therefore, it follows from \eqref{eq-2.32} and the implicit function theorem
that the derivative of $a^*=a(r)$ is continuous in $(0,1)$ and
$$
\frac{d a(r)}{d r}
=-\frac{ \partial_{r} F(r,a) }
{ \partial_{a} F(r,a) }.
$$
The proof of the lemma is completed.
\end{proof}

\section{Proof of the main result}\label{sec-3}
The aim of this section is to prove the part of Theorem \ref{thm-1.1} when $p\in(1,\infty)$.
In fact, it can be derived directly from Lemmas \ref{lem-3.1}$\sim$\ref{lem-3.4}.
First, we obtain the following estimate on $|u|$.
 \begin{lem}\label{lem-3.1}
For $\alpha\in(-\frac{1}{2},\infty)$  and  $p\in(1,\infty)$, suppose that $u=P_{\alpha}[f]$ and $u(0)=0$, where
$f\in L^p(\mathbb{S}^{n-1},\mathbb{R}^{n})$.
Then
\begin{equation}\label{eq-3.1}
|u(x)|\le G_p(|x|)\|f\|_{L^{p}}
\end{equation}
 in $\mathbb{B}^n$,
 where $G_p (r)$ is the mapping given by \eqref{eq-1.7} and  $\frac{d}{dr}G_p (r)$ is continuous in $(0,1)$ with $G_p(0)=0$.
The inequality  is sharp.
 \end{lem}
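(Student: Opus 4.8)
The plan is to combine H\"older's inequality with the rotational symmetry of the Poisson kernel, and then to read off the analytic behaviour of $G_p$ from Lemma~\ref{lem-2.6}. First I would exploit the normalization $u(0)=0$. Since $P_\alpha(0,\zeta)=C_{n,\alpha}$ by \eqref{eq-1.4}, the hypothesis $u(0)=P_\alpha[f](0)=0$ forces $\int_{\mathbb{S}^{n-1}}f\,d\sigma=0$, whence for every $a\in\mathbb{R}$ and every $x\in\mathbb{B}^n$,
$$
u(x)=\int_{\mathbb{S}^{n-1}}\big(P_\alpha(x,\zeta)-a\big)f(\zeta)\,d\sigma(\zeta).
$$
H\"older's inequality with the conjugate exponents $p,q$ then gives
$$
|u(x)|\le\left(\int_{\mathbb{S}^{n-1}}|P_\alpha(x,\zeta)-a|^{q}\,d\sigma(\zeta)\right)^{1/q}\|f\|_{L^p}.
$$
Because $P_\alpha(x,\zeta)$ depends only on $|x|$ and $|x-\zeta|$, it is invariant under $(x,\zeta)\mapsto(Tx,T\zeta)$ for orthogonal $T$; choosing $T$ with $Tx=|x|e_n$ and substituting $\eta=T\zeta$ (the surface measure being rotation invariant) rewrites the last integral as $\Phi_q(|x|,a)^q$, with $\Phi_q$ as in \eqref{eq-2.30}. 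Taking the infimum over $a\in[0,\infty)$ and recalling \eqref{eq-1.7} yields $|u(x)|\le G_p(|x|)\|f\|_{L^p}$.

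Next I would identify $G_p$ with $\Phi_q$ evaluated at its minimiser. By Lemma~\ref{lem-2.6}, for each $r\in[0,1)$ the minimum $\min_{a\in\mathbb{R}}\Phi_q(r,a)$ is attained at a unique $a^*=a(r)\in(0,\infty)$, so the infimum over $[0,\infty)$ in \eqref{eq-1.7} equals $\Phi_q(r,a(r))$ and $G_p(r)=\Phi_q(r,a(r))$. Since $a(0)=C_{n,\alpha}$ and $P_\alpha(0,\cdot)\equiv C_{n,\alpha}$, this gives $G_p(0)=\Phi_q(0,C_{n,\alpha})=0$. For the derivative I would apply the chain rule,
$$
\frac{d}{dr}G_p(r)=\partial_r\Phi_q(r,a(r))+\partial_a\Phi_q(r,a(r))\,a'(r),
$$
and use that $a(r)$ is the minimiser: by \eqref{eq-2.31} and \eqref{eq-2.32} one has $\partial_a\Phi_q(r,a(r))=0$, so $\frac{d}{dr}G_p(r)=\partial_r\Phi_q(r,a(r))$. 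Lemma~\ref{lem-2.6} guarantees the continuity of $a(r)$, so it remains only to verify that $\partial_r\Phi_q$ is continuous on $(0,1)\times(0,\infty)$.

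For the latter I would differentiate $\Phi_q^{q}=\int_{\mathbb{S}^{n-1}}|P_\alpha(re_n,\eta)-a|^{q}\,d\sigma(\eta)$ under the integral sign. For $q>1$ the integrand is $C^1$ in $r$, with $r$-derivative $q\,(P_\alpha-a)|P_\alpha-a|^{q-2}\partial_rP_\alpha$ continuous and locally bounded on $(0,1)\times(0,\infty)\times\mathbb{S}^{n-1}$ since the surviving exponent $q-1$ on $|P_\alpha-a|$ is positive; hence differentiation under the integral is legitimate and
$$
\partial_r\Phi_q(r,a)=\Phi_q(r,a)^{1-q}\int_{\mathbb{S}^{n-1}}\big(P_\alpha(re_n,\eta)-a\big)|P_\alpha(re_n,\eta)-a|^{q-2}\,\partial_rP_\alpha(re_n,\eta)\,d\sigma(\eta),
$$
which is continuous for $r\in(0,1)$ because $\Phi_q(r,a)>0$ there. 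Composing with the continuous function $a(r)$ shows $\frac{d}{dr}G_p$ is continuous on $(0,1)$.

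Finally, for sharpness I would reverse the estimate at a prescribed radius. Fix $x_0=r_0e_n$ with $r_0\in(0,1)$, set $a^*=a(r_0)$, choose a unit vector $v\in\mathbb{R}^n$, and define
$$
f(\zeta)=\big(P_\alpha(x_0,\zeta)-a^*\big)\,|P_\alpha(x_0,\zeta)-a^*|^{q-2}\,v.
$$
Then $\int_{\mathbb{S}^{n-1}}f\,d\sigma=F(r_0,a^*)\,v=0$ by \eqref{eq-2.32}, so $u=P_\alpha[f]$ obeys $u(0)=0$; moreover $(q-1)p=q$ gives $\|f\|_{L^p}^{p}=\int_{\mathbb{S}^{n-1}}|P_\alpha(x_0,\zeta)-a^*|^{q}\,d\sigma=G_p(r_0)^{q}$, while $u(x_0)=v\int_{\mathbb{S}^{n-1}}|P_\alpha(x_0,\zeta)-a^*|^{q}\,d\sigma=G_p(r_0)^{q}v$, so that $|u(x_0)|=G_p(r_0)\|f\|_{L^p}$. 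The genuinely technical point is the continuity of $\frac{d}{dr}G_p$, but it rests entirely on the minimiser analysis of Lemma~\ref{lem-2.6} (which supplies $a\in C^1$) together with the elementary differentiation-under-the-integral step above; the key idea behind sharpness is that the minimiser equation $F(r_0,a^*)=0$ is precisely the condition making the H\"older-extremal $f$ compatible with the constraint $u(0)=0$.
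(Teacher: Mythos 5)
Your proposal is correct and follows essentially the same route as the paper's proof: H\"older's inequality applied to $u(x)=\int_{\mathbb{S}^{n-1}}(P_\alpha(x,\zeta)-a)f(\zeta)\,d\sigma(\zeta)$ (using $u(0)=0$), reduction to $x=|x|e_n$ by rotational invariance of the kernel, identification of $G_p(r)$ with $\Phi_q(r,a(r))$ via Lemma~\ref{lem-2.6}, and the envelope identity $\frac{d}{dr}G_p(r)=\partial_r\Phi_q\big(r,a(r)\big)$ obtained from \eqref{eq-2.31} and \eqref{eq-2.32}, followed by continuity of $\partial_r\Phi_q$ by differentiation under the integral sign. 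The only place you go beyond the paper is sharpness, which the paper defers to \cite[Inequality (2.17)]{Chen21} while you write out the H\"older-extremal function $f=\big(P_\alpha(x_0,\cdot)-a^*\big)\,|P_\alpha(x_0,\cdot)-a^*|^{q-2}v$ explicitly; your verification that the minimiser equation $F(r_0,a^*)=0$ of \eqref{eq-2.32} forces $u(0)=0$ and that $(q-1)p=q$ yields exact equality is correct, so this simply supplies the detail the paper omits.
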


\begin{proof} Let $p\in(1,\infty)$ and let $q$ be its conjugate.
For any $x\in \mathbb{B}^{n}$ and $a\in \mathbb{R}^{n}$, it follows from the assumption $u=P_{\alpha}[f]$ and $u(0)=0$ that
$$
u(x)=\int_{\mathbb{S}^{n-1}} (P_{\alpha}(x,\eta)-a)f(\eta)d\sigma(\eta).
$$

If $x=re_{n}$ for some $r\in [0,1)$, then by using H\"older's inequality, we have
\begin{equation}\label{eq-3.2}
	|u(re_{n})| \leqslant\left(\int_{\mathbb{S}^{n-1}}\left|P_{\alpha}\left(r e_{n}, \eta\right)-a\right|^{q} d \sigma(\eta)\right)^{1 / q}\|f\|_{L^{p}}=\Phi_{q }(r,a)\|f\|_{L^{p}},
\end{equation}
where $\Phi_{q }(r,a)$ is the mapping from \eqref{eq-2.30}.

 If for any $r\in[0,1)$, $x\ne re_{n}$, then we choose a unitary transformation $A$ such that $A(re_{n})=x$.
Since for any $w\in \mathbb{B}^{n}$,
 \begin{align*}
P_{\alpha}[f](A(w))
&=C_{n,\alpha}\int_{\mathbb{S}^{n-1}}\frac{(1-|A(w)|^{2})^{1+2\alpha}}{\left|A(w)-\eta\right|^{n+2\alpha}}f(\eta)d\sigma(\eta)\\
&=C_{n,\alpha}\int_{\mathbb{S}^{n-1}}\frac{(1-|w|^{2})^{1+2\alpha}}{\left|w-A^{-1}(\eta)\right|^{n+2\alpha}}f(\eta)d\sigma(\eta)\\
&=C_{n,\alpha}\int_{\mathbb{S}^{n-1}}\frac{(1-|w|^{2})^{1+2\alpha}}{\left|w- \xi \right|^{n+2\alpha}}f(A(\xi))d\sigma( \xi)\\
 &=P_{\alpha}[f\circ A](w),
  \end{align*}
  we see that
  $$
  u(x)=P_{\alpha}[f](A(re_{n}))=P_{\alpha}[f\circ A](re_{n}).
  $$
Notice that $\|f\circ A\|_{L^{p}}=\|f\|_{L^{p}}$.
By   replacing $u$ with  $P_{\alpha}[f\circ A]$ and replacing $f$ with $f\circ A$ in \eqref{eq-3.2}, respectively,
we see   that   for any $x\in \mathbb{B}^{n}$,
\begin{equation}\label{eq-3.3}
|u(x)|=\big|P_{\alpha}[f\circ A](re_{n})\big|\leq \Phi_{q }(r,a)\|f\circ A\|_{L^{p}}= \Phi_{q }(r,a)\|f\|_{L^{p}}.
\end{equation}

Further, for $r\in[0,1)$ and $q\in(1,\infty)$,
by \eqref{eq-1.8}, \eqref{eq-2.30} and Lemma \ref{lem-2.6}, we get
$$\min_{a\in \mathbb{R}}\Phi_{q }(r,a)
=\Phi_{q }(r,a^*)
=\Phi_{q }\big(r,a(r)\big)=G_p(r).
$$
This, together with \eqref{eq-3.3},
implies that \eqref{eq-3.1} holds true.
Further, by \eqref{eq-2.30} and Lemma \ref{lem-2.6}, we know   that $G_p(0)=0$.

Next, we show the smoothness of  $G_p(r)$. Since $\Phi_{q }\big(r,a(r)\big)=G_p(r)$, we see that
$$
\frac{d G_{p}(r)}{dr}
=\left.\frac{\partial \Phi(r, a )}{\partial r} \right|_{a=a(r)}+\left.\frac{\partial\Phi(r, a )}{\partial a} \right|_{a=a(r)} \frac{d a(r)}{dr}
.
$$
Then \eqref{eq-2.31} and \eqref{eq-2.32} ensure that
$$
\frac{d G_{p}(r)}{dr}
=\left.\frac{\partial \Phi(r, a )}{\partial r} \right|_{a=a(r)}
.
$$
It follows from \eqref{eq-2.30} that
\begin{align*}
 \frac{\partial \Phi(r, a )}{\partial r}
= &\int_{\mathbb{S}^{n-1}} \partial_{ r} P_\alpha(re_{n},\eta)  \cdot \left(P_\alpha(re_{n},\eta)-a\right)\cdot |P_\alpha(re_{n},\eta)-a|^{q-2} d\sigma(\eta)\\
	& \times \left(\int_{\mathbb{S}^{n-1}}\left|P_\alpha(re_{n},\eta)-a\right|^{q} d \sigma(\eta)\right)^{1 / q-1}.
\end{align*}
Obviously, the mapping $(r,a)\mapsto \int_{\mathbb{S}^{n-1}}\left|P_\alpha(re_{n},\eta)-a\right|^{q} d \sigma(\eta)$
is positive and continuous in $(0,1)\times(0,\infty)$.
The continuity of the mapping
$$(r,a)\mapsto \int_{\mathbb{S}^{n-1}} \partial_{ r} P_\alpha(re_{n},\eta)  \cdot \left(P_\alpha(re_{n},\eta)-a\right)\cdot |P_\alpha(re_{n},\eta)-a|^{q-2} d\sigma(\eta)$$
in $ (0,1)\times\mathbb{R}$ easily follows from the continuity of $\partial_rF(r,a)$ which is given by \eqref{eq-2.11},  and we omit it.
By these and the continuity of  $\frac{d}{dr}a(r)$, we see that
   $\frac{d}{dr}G_p (r)$ is continuous in $(0,1)$.

 The proof of the sharpness of inequality \eqref{eq-3.1} is similar to that of \cite[Inequality (2.17)]{Chen21},
 where $P_{\alpha}$ is used
instead of $P_{h}$ and $f_{x}$ is used
instead of $\phi_{x}$, and we omit it.
\end{proof}

\begin{lem}\label{lem-3.2}
For $\alpha\in(-\frac{1}{2},\infty)$  and  $p\in(1,\infty]$, suppose that $u=P_{\alpha}[f]$ and $u(0)=0$, where
$f\in L^p(\mathbb{S}^{n-1},\mathbb{R}^{n})$.
Then
\begin{equation}\label{eq-3.4}
	\|D u(0)\| \leq C_{n,\alpha}(n+2\alpha)\left(\frac{\Gamma\left(\frac{n}{2}\right) \Gamma\left(\frac{1+q}{2}\right)}{\sqrt{\pi} \Gamma\left(\frac{n+q}{2}\right)}\right)^{\frac{1}{q}}\|f\|_{L^{p}}.
\end{equation}
The inequality is sharp.
 \end{lem}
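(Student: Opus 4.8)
The plan is to reduce the estimate to an explicit computation of the Jacobian $Du(0)$ followed by a single application of Hölder's inequality on the sphere. First I would differentiate the Poisson-type kernel under the integral sign; this is legitimate because for $x$ in a neighbourhood of the origin the distance $|x-\zeta|$ stays bounded away from $0$ uniformly in $\zeta\in\mathbb{S}^{n-1}$, so $P_{\alpha}(\cdot,\zeta)$ is smooth in $x$ with derivatives dominated uniformly in $\zeta$ (the same differentiation that, as noted after \eqref{eq-1.6}, gives $\Delta_{\alpha}(P_{\alpha}[\mu])=0$). A direct computation of $\partial_{x_j}P_{\alpha}(x,\zeta)$ at $x=0$ gives, since the factor $(1-|x|^2)^{1+2\alpha}$ has vanishing gradient at the origin and $|0-\zeta|=1$,
$$
\frac{\partial P_{\alpha}}{\partial x_j}(0,\zeta)=C_{n,\alpha}(n+2\alpha)\zeta_j .
$$
Consequently, for every $\xi\in\mathbb{S}^{n-1}$,
$$
Du(0)\xi=C_{n,\alpha}(n+2\alpha)\int_{\mathbb{S}^{n-1}}\langle\zeta,\xi\rangle\,f(\zeta)\,d\sigma(\zeta).
$$

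Next I would estimate $|Du(0)\xi|$. Pulling the norm inside the integral and applying Hölder's inequality with exponents $p$ and $q$ yields
$$
|Du(0)\xi|\le C_{n,\alpha}(n+2\alpha)\left(\int_{\mathbb{S}^{n-1}}|\langle\zeta,\xi\rangle|^{q}\,d\sigma(\zeta)\right)^{1/q}\|f\|_{L^{p}}.
$$
By the rotational invariance of $d\sigma$ the inner integral does not depend on the unit vector $\xi$, so I may take $\xi=e_n$ and evaluate the standard moment
$$
\int_{\mathbb{S}^{n-1}}|\zeta_n|^{q}\,d\sigma(\zeta)=\frac{\Gamma\left(\frac{n}{2}\right)\Gamma\left(\frac{1+q}{2}\right)}{\sqrt{\pi}\,\Gamma\left(\frac{n+q}{2}\right)}.
$$
Taking the supremum over $\xi\in\mathbb{S}^{n-1}$ then gives precisely the claimed bound \eqref{eq-3.4} on $\|Du(0)\|$.

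For the sharpness I would exhibit an explicit extremal datum, analogous to the construction in \cite[Inequality (2.17)]{Chen21}. Fix a unit vector $v\in\mathbb{R}^{n}$ and set $f(\zeta)=\operatorname{sgn}(\zeta_n)\,|\zeta_n|^{q-1}v$, which for $p=\infty$ (so $q=1$) reads $f(\zeta)=\operatorname{sgn}(\zeta_n)v$. Three checks make this work simultaneously: the integrand is odd in $\zeta_n$, so $\int_{\mathbb{S}^{n-1}}f\,d\sigma=0$ and hence $u(0)=C_{n,\alpha}\int_{\mathbb{S}^{n-1}}f\,d\sigma=0$; one has $|f|^{p}=|\zeta_n|^{q}$ because $(q-1)p=q$, which is exactly the equality case of the Hölder step with $\xi=e_n$; and $\langle\zeta,e_n\rangle f(\zeta)=|\zeta_n|^{q}v$ keeps a constant direction, so the triangle inequality inside the integral is also an equality. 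Evaluating, $|Du(0)e_n|=C_{n,\alpha}(n+2\alpha)\int_{\mathbb{S}^{n-1}}|\zeta_n|^{q}\,d\sigma$ while $\|f\|_{L^{p}}=(\int_{\mathbb{S}^{n-1}}|\zeta_n|^{q}\,d\sigma)^{1/p}$, and the ratio $|Du(0)e_n|/\|f\|_{L^{p}}$ equals the constant on the right-hand side of \eqref{eq-3.4}, so the inequality is sharp for all $p\in(1,\infty]$.

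The routine parts are the kernel differentiation and the moment integral; the only point demanding genuine care is the sharpness, where the extremizer must be engineered to satisfy the normalization $u(0)=0$ and to saturate both the triangle inequality and Hölder's inequality at once. The normalization is what forces the odd sign factor $\operatorname{sgn}(\zeta_n)$, and the hard part will be confirming that this factor does not spoil the directional alignment needed for the triangle inequality — as noted above it does not, since $\zeta_n\operatorname{sgn}(\zeta_n)=|\zeta_n|\ge 0$ keeps $\langle\zeta,e_n\rangle f(\zeta)$ parallel to $v$ for a.e.\ $\zeta$.
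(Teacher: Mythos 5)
Your proposal is correct and follows essentially the same route as the paper: differentiate the kernel under the integral sign to get $Du(0)\xi=C_{n,\alpha}(n+2\alpha)\int_{\mathbb{S}^{n-1}}\langle\zeta,\xi\rangle f(\zeta)\,d\sigma(\zeta)$, apply H\"older together with the moment $\int_{\mathbb{S}^{n-1}}|\zeta_n|^q\,d\sigma$, and verify sharpness with the datum $\operatorname{sgn}(\zeta_n)|\zeta_n|^{q-1}v$, which coincides with the paper's extremizer $|\eta_i|^{q/p}\operatorname{sign}(\eta_i)$ since $q/p=q-1$. The only cosmetic difference is that you justify the differentiation by a direct domination argument and carry out the equality checks explicitly, where the paper cites \cite[Proposition 2.4]{kp} and defers the sharpness computation to \cite[p.632]{Chen21}.
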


\begin{proof}
Let  $f=\left(f_{1}, \ldots, f_{n}\right)$  and $u=\left(u_{1}, \ldots, u_{n}\right)$.
 For $r \in[0,1)$ and $i \in\{1, \ldots, n\}$, \cite[Proposition 2.4] {kp} guarantees that
$$\nabla u_{i} (r e_{n} )=\int_{\mathbb{S}^{n-1}} \nabla P_{\alpha} (r e_{n}, \eta ) f_{i}(\eta) d \sigma(\eta),$$
where the gradients $\nabla u_{i}$ are understood as column vectors and
$$
\nabla P_{\alpha}\left(r e_{n}, \eta\right)=C_{n,\alpha}\frac{\left(1-r^{2}\right)^{2\alpha}\left((n+2\alpha)\left(1-r^{2}\right)\left(\eta-r e_{n}\right)-2(1+2\alpha)r e_{n}\left|\eta-r e_{n}\right|^{2}\right)}{\left|\eta-r e_{n}\right|^{n+2+2\alpha}}.$$

Then for any $\xi \in \mathbb{S}^{n-1}$, we have
\begin{align*}
	D u(0) \xi & =\left(\left\langle\nabla u_{1}(0), \xi\right\rangle, \ldots,\left\langle\nabla u_{n}(0), \xi\right\rangle\right)^{T} \\
	& =C_{n,\alpha}(n+2\alpha) \int_{\mathbb{S}^{n-1}}\langle\eta, \xi\rangle f(\eta) d \sigma(\eta),
\end{align*}
where $T$ is the transpose. Since
\begin{align*}
	\max _{\xi \in \mathbb{S}^{n-1}}\left|\int_{\mathbb{S}^{n-1}}\langle\eta, \xi\rangle f(\eta) d \sigma(\eta)\right| 
& \leq \max _{\xi \in \mathbb{S}^{n-1}}\left(\int_{\mathbb{S}^{n-1}}|\langle\eta, \xi\rangle|^{q} d \sigma(\eta)\right)^{\frac{1}{q}}\|f\|_{L^{p}} \\
	& =\left(\int_{\mathbb{S}^{n-1}}\left|\eta_{n}\right|^{q} d \sigma(\eta)\right)^{\frac{1}{q}}\|f\|_{L^{p}}
\end{align*}
and
\begin{equation*}
	\int_{\mathbb{S}^{n-1}}\left|\eta_{n}\right|^{q} d \sigma(\eta)=\frac{\Gamma\left(\frac{n}{2}\right) \Gamma\left(\frac{1+q}{2}\right)}{\sqrt{\pi} \Gamma\left(\frac{n+q}{2}\right)}:=\alpha_{q},
\end{equation*}
we obtain
$$
\|D u(0)\|=\max _{\xi \in \mathbb{S}^{n-1}}|D u(0) \xi| \leq C_{n,\alpha}(n+2\alpha) \alpha_{q}^{\frac{1}{q}}\|f\|_{L^{p}},
$$
where $\eta=\left(\eta_{1}, \ldots, \eta_{n}\right) \in \mathbb{S}^{n-1}$.

To prove the sharpness of inequality \eqref{eq-3.4}, for any $i \in\{1, \ldots, n\}$, let
$$
f(\eta)=\left|\eta_{i}\right|^{\frac{q}{p}} \cdot \operatorname{sign}\left(\eta_{i}\right).
$$
Then the similar reasoning as in \cite[p.632]{Chen21}   shows the sharpness of \eqref{eq-3.4}.
\end{proof}

The following two results are some properties of $G_p$ which is given by \eqref{eq-1.7}.
 \begin{lem}\label{lem-3.3}
	For $\alpha\in (-\frac{1}{2},0]\cup [\frac{n}{2}-1,\infty)$ and $p \in(1, \infty)$, $G_{p}:[0,1) \rightarrow[0, \infty)$ is an strictly increasing mapping with $G_{p}(0)=0$.
 \end{lem}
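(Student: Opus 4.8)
The plan is to show that the $C^1$ function $G_p$ from Lemma \ref{lem-3.1} is strictly increasing by proving that its derivative is strictly positive on $(0,1)$; together with $G_p(0)=0$ and continuity at $0$, this gives strict monotonicity on all of $[0,1)$. Recall from the proof of Lemma \ref{lem-3.1} that
\[
\frac{d}{dr}G_p(r)=\left(\int_{\mathbb{S}^{n-1}}|P_\alpha(re_n,\eta)-a(r)|^{q}\,d\sigma(\eta)\right)^{\frac{1}{q}-1}I(r),
\]
where $a(r)$ is the optimal constant from Lemma \ref{lem-2.6} and
\[
I(r)=\int_{\mathbb{S}^{n-1}}\partial_r P_\alpha(re_n,\eta)\,\big(P_\alpha(re_n,\eta)-a(r)\big)\,|P_\alpha(re_n,\eta)-a(r)|^{q-2}\,d\sigma(\eta).
\]
Since the prefactor is positive, it suffices to prove $I(r)>0$ for every $r\in(0,1)$.

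First I would record that $P:=P_\alpha(re_n,\eta)$ depends on $\eta$ only through $\rho=1+r^2-2r\eta_n$, so that logarithmic differentiation lets me rewrite $\partial_r P_\alpha$, \emph{as a function of the value} $P$, in the closed form $\phi(P)=c_2P^{1+\frac{2}{n+2\alpha}}-c_1P$, where $c_1,c_2>0$ depend only on $r$ (using $n\ge3$, $\alpha>-\frac{1}{2}$). Thus $\phi$ is strictly convex with a single sign change at $P_*:=(c_1/c_2)^{(n+2\alpha)/2}$, being negative on $(0,P_*)$ and positive on $(P_*,\infty)$. Writing $\psi(P)=(P-a(r))|P-a(r)|^{q-2}$, which is strictly increasing in $P$, the defining relation $F(r,a(r))=0$ from \eqref{eq-2.32} reads $\int_{\mathbb{S}^{n-1}}\psi(P)\,d\sigma(\eta)=0$, and $I(r)=\int_{\mathbb{S}^{n-1}}\phi(P)\psi(P)\,d\sigma(\eta)$.

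The one place where the hypothesis $\alpha\in(-\frac{1}{2},0]\cup[\frac{n}{2}-1,\infty)$ is used is the auxiliary fact that $\int_{\mathbb{S}^{n-1}}\partial_r P_\alpha(re_n,\eta)\,d\sigma(\eta)=\frac{d}{dr}\big(\Phi_0^\alpha(r^2)/\Phi_0^\alpha(1)\big)\ge0$. To prove it I would set $v(r)=\int_{\mathbb{S}^{n-1}}P_\alpha(re_n,\eta)\,d\sigma(\eta)=\Phi_0^\alpha(r^2)/\Phi_0^\alpha(1)$ (see \eqref{eq-2.2}), note $v>0$ and $v'(0)=0$, and use the radial form of $\Delta_\alpha v=0$. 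At any interior critical point the equation forces $v''$ to have the sign of $-\alpha(\frac{n}{2}-1-\alpha)\,v\ge0$, so $v'$ cannot pass from positive to negative; since the Taylor coefficient of $r^2$ in $v$ equals $\frac{-\alpha(\frac{n}{2}-1-\alpha)}{n/2}\,v(0)\ge0$, $v$ is non-decreasing near $0$, whence $v'\ge0$ throughout $(0,1)$. Here the sign condition $\alpha(\frac{n}{2}-1-\alpha)\le0$, which is precisely the stated range for $\alpha$, is decisive (it is the same condition behind the maximum principle in Lemma \ref{lem-2.1}).

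Finally I would establish $I(r)>0$ by a sign analysis in two cases. If $a(r)\ge P_*$, I use $I(r)=\int_{\mathbb{S}^{n-1}}(\phi(P)-\phi(a(r)))\psi(P)\,d\sigma(\eta)$, which is legitimate because $\int\psi\,d\sigma=0$; the explicit form and convexity of $\phi$ give $\operatorname{sign}(\phi(P)-\phi(a(r)))=\operatorname{sign}(P-a(r))=\operatorname{sign}\psi(P)$, so the integrand is nonnegative and strictly positive off a null set. If $a(r)\le P_*$, I use $I(r)=\int_{\mathbb{S}^{n-1}}\phi(P)(\psi(P)-\psi(P_*))\,d\sigma(\eta)+\psi(P_*)\int_{\mathbb{S}^{n-1}}\phi(P)\,d\sigma(\eta)$: both $\phi$ and $\psi(\cdot)-\psi(P_*)$ change sign at $P_*$, so the first integral is strictly positive, while $\psi(P_*)\ge0$ together with the auxiliary fact makes the second term nonnegative. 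Either way $I(r)>0$. The main obstacle is the auxiliary monotonicity of $\int P_\alpha\,d\sigma$, which is exactly where the restriction on $\alpha$ is needed (and where it fails on $(0,\frac{n}{2}-1)$, consistent with Figure \ref{fig1}); a secondary point of care is arranging the two decompositions so that this fact is invoked only in the case that actually requires it.
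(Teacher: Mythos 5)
Your argument is correct in substance, but it takes a genuinely different route from the paper's. The paper's (omitted) proof follows \cite[Lemma 2.7]{Chen21}: one realizes $G_p(r_1)$ as $|u(r_1e_n)|$ for an extremal $u=P_\alpha[f]$ with $u(0)=0$, $\|f\|_{L^p}\le1$ (existence of extremals resting on the weak* compactness behind Lemma \ref{lem-2.2}), and then, for $r_1<r_2$, applies the maximum principle of Lemma \ref{lem-2.1} on $\{|x|\le r_2\}$ together with rotation invariance to get $G_p(r_1)=|u(r_1e_n)|<\max_{|x|=r_2}|u(x)|\le G_p(r_2)$; the hypothesis on $\alpha$ enters only through Lemma \ref{lem-2.1}. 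You instead differentiate: using the formula for $\frac{d}{dr}G_p$ from the proof of Lemma \ref{lem-3.1} and the stationarity relation \eqref{eq-2.32}, you reduce the claim to $I(r)>0$, and settle that by the closed form $\partial_rP_\alpha=\phi(P_\alpha)$ with $\phi(P)=c_2P^{1+2/(n+2\alpha)}-c_1P$ plus the auxiliary monotonicity of $v(r)=\int_{\mathbb{S}^{n-1}}P_\alpha(re_n,\eta)\,d\sigma(\eta)$, proved from the radial ODE $\Delta_\alpha v=0$. Your route is quantitative ($G_p'>0$ on $(0,1)$, not merely strict increase), bypasses both the extremal machinery and the maximum principle, and localizes exactly where the restriction on $\alpha$ is used; indeed your sign condition $\alpha(\frac{n}{2}-1-\alpha)\le0$ is precisely the condition $c\le 0$ under which \cite[Theorem 3.5]{GT} applies in Lemma \ref{lem-2.1}, so the two proofs consume the hypothesis in equivalent ways. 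The cost is more computation and reliance on Lemmas \ref{lem-2.4}--\ref{lem-2.6}, whereas the paper's soft argument is shorter given its lemmas and reuses the same mechanism for $p=\infty$ in Section \ref{sec-4}.

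Two details need patching before your sketch is airtight. First, in the ODE step, ``at a critical point $v''\ge0$, hence $v'$ cannot pass from positive to negative'' is not a valid implication if $v''=0$ there; but $v''(r_0)=-4\alpha(\frac{n}{2}-1-\alpha)v(r_0)/(1-r_0^2)$ vanishes only when $\alpha\in\{0,\frac{n}{2}-1\}$, and in those two cases $\Phi_0^\alpha\equiv1$, so $v$ is constant and the auxiliary fact is trivial; for all other admissible $\alpha$ you have strict inequality $v''(r_0)>0$ and the barrier argument is rigorous. Second, for strict positivity of $I(r)$ you must note that for $r>0$ the kernel $\eta\mapsto P_\alpha(re_n,\eta)$ is a strictly monotone function of $\eta_n$, so the level sets $\{P_\alpha=a(r)\}$ and $\{P_\alpha=P_*\}$ are $\sigma$-null while their complements have positive measure; this is what rules out the nonnegative integrands vanishing almost everywhere. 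The same non-constancy gives $G_p(r)>0=G_p(0)$ for $r>0$, which is all you need to upgrade strict increase on $(0,1)$ to strict increase on $[0,1)$ (continuity of $G_p$ at $0$ is not actually required).
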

\begin{proof}

  The proof of this result is similar  to that of \cite[Lemma 2.7]{Chen21},
 where $P_{\alpha}$ is used
instead of $P_{h}$,
Lemma \ref{lem-2.2} is used
instead of  \cite[Equality (1.1)]{Chen21},
  Lemma \ref{lem-2.1} is used
instead of \cite[Theorem 4.4.2(a)]{sto2016}, and we omit it.
\end{proof}

\begin{lem}\label{lem-3.4}
For $p \in(1, \infty)$ and  $r \in[0,1)$, $G_{p}(r)$ is derivable at $r=0$ and
$$
G_{p}^{\prime}(0)=C_{n,\alpha}(n+2\alpha)\left(\frac{\Gamma\left(\frac{n}{2}\right) \Gamma\left(\frac{1+q}{2}\right)}{\sqrt{\pi} \Gamma\left(\frac{n+q}{2}\right)}\right)^{\frac{1}{q}}.
$$	
 \end{lem}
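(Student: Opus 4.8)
The plan is to exploit that $G_p(0)=0$ (established in Lemma \ref{lem-3.1}), so that the asserted derivative is simply the one-sided limit $G_p'(0)=\lim_{r\to0^+}G_p(r)/r$, and to pin this quotient down by squeezing it between matching upper and lower bounds. The only input about $P_\alpha$ I need is its first-order behaviour at the origin: differentiating \eqref{eq-1.4} along $x=re_n$ gives
$$
\lim_{r\to0^+}\frac{P_\alpha(re_n,\eta)-C_{n,\alpha}}{r}
=\frac{\partial}{\partial r}P_\alpha(re_n,\eta)\Big|_{r=0}
=C_{n,\alpha}(n+2\alpha)\eta_n ,
$$
and, since $|re_n-\eta|\ge 1-r\ge\tfrac12$ for $r\in(0,\tfrac12]$, the difference quotient $\big(P_\alpha(re_n,\eta)-C_{n,\alpha}\big)/r$ is bounded uniformly on $(0,\tfrac12]\times\mathbb{S}^{n-1}$. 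Throughout I write $\alpha_q=\int_{\mathbb{S}^{n-1}}|\eta_n|^q\,d\sigma(\eta)=\frac{\Gamma(n/2)\Gamma((1+q)/2)}{\sqrt\pi\,\Gamma((n+q)/2)}$, the constant already computed in Lemma \ref{lem-3.2}.

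For the upper bound I test the infimum in \eqref{eq-1.7} with the admissible value $a=C_{n,\alpha}$ (which is positive for $\alpha>-\tfrac12$), so that $G_p(r)\le\Phi_q(r,C_{n,\alpha})$. Raising to the $q$-th power, dividing by $r^q$ and applying the dominated convergence theorem, justified by the uniform bound above, yields
$$
\frac{\Phi_q(r,C_{n,\alpha})^q}{r^q}
=\int_{\mathbb{S}^{n-1}}\left|\frac{P_\alpha(re_n,\eta)-C_{n,\alpha}}{r}\right|^q d\sigma(\eta)
\longrightarrow \big(C_{n,\alpha}(n+2\alpha)\big)^q\alpha_q ,
$$
whence $\limsup_{r\to0^+}G_p(r)/r\le C_{n,\alpha}(n+2\alpha)\alpha_q^{1/q}$.

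The lower bound is the genuine difficulty, since a priori one must control the minimiser $a(r)$ of Lemma \ref{lem-2.6} as $r\to0^+$; I sidestep this by integrating $P_\alpha(re_n,\cdot)-a$ against a mean-zero weight, which annihilates the unknown constant $a$ simultaneously for every $a$. Fix $\psi(\eta)=\operatorname{sign}(\eta_n)\,|\eta_n|^{q-1}$, which is odd in $\eta_n$, hence $\int_{\mathbb{S}^{n-1}}\psi\,d\sigma=0$, while $\|\psi\|_{L^p}^p=\int_{\mathbb{S}^{n-1}}|\eta_n|^{p(q-1)}d\sigma=\alpha_q$ because $p(q-1)=q$. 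Since $\int a\psi\,d\sigma=0$ for every constant $a$, Hölder's inequality gives, for every $a\in[0,\infty)$,
$$
\Phi_q(r,a)=\|P_\alpha(re_n,\cdot)-a\|_{L^q}\ \ge\ \frac{1}{\|\psi\|_{L^p}}\int_{\mathbb{S}^{n-1}}\big(P_\alpha(re_n,\eta)-a\big)\psi(\eta)\,d\sigma(\eta)=\frac{1}{\alpha_q^{1/p}}\int_{\mathbb{S}^{n-1}}P_\alpha(re_n,\eta)\psi(\eta)\,d\sigma(\eta),
$$
and taking the infimum over $a$ on the left yields the same lower bound for $G_p(r)$.

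Finally the constant part $C_{n,\alpha}$ of $P_\alpha$ integrates against $\psi$ to $0$, and dominated convergence gives $\tfrac1r\int_{\mathbb{S}^{n-1}}P_\alpha(re_n,\eta)\psi(\eta)\,d\sigma\to C_{n,\alpha}(n+2\alpha)\int_{\mathbb{S}^{n-1}}\eta_n\psi\,d\sigma=C_{n,\alpha}(n+2\alpha)\alpha_q$. Dividing by $r$ and letting $r\to0^+$ produces $\liminf_{r\to0^+}G_p(r)/r\ge C_{n,\alpha}(n+2\alpha)\alpha_q^{1-1/p}=C_{n,\alpha}(n+2\alpha)\alpha_q^{1/q}$. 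Together with the upper bound this shows the limit exists and equals $C_{n,\alpha}(n+2\alpha)\alpha_q^{1/q}$, i.e. $G_p'(0)=C_{n,\alpha}(n+2\alpha)\big(\tfrac{\Gamma(n/2)\Gamma((1+q)/2)}{\sqrt\pi\,\Gamma((n+q)/2)}\big)^{1/q}$. The essential obstacle is the lower bound, which the mean-zero test function $\psi$ resolves without any information about $a(r)$ near the origin; the two interchanges of limit and integral are routine consequences of the uniform bound on the difference quotient.
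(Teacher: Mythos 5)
Your proof is correct, and it follows essentially the same route as the paper's: the paper omits the argument by deferring to \cite[Lemma 2.8]{Chen21}, whose proof is exactly this squeeze --- an upper bound on $G_p(r)/r$ obtained by testing the infimum at $a=C_{n,\alpha}$ (the kernel's value at the origin) with dominated convergence, and a matching lower bound produced by the extremal function $\operatorname{sign}(\eta_n)|\eta_n|^{q-1}$, which is the same test function used for the sharpness of the gradient bound in Lemma \ref{lem-3.2}. Your duality phrasing of the lower bound (Hölder against a mean-zero $\psi$, which kills the unknown constant $a$ uniformly) is a clean, self-contained rendering of that step, and all the interchanges of limit and integral are properly justified by the uniform bound on the difference quotient for $r\le 1/2$.
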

\begin{proof}
  The proof of this result is similar  to that of \cite[Lemma 2.8]{Chen21},
 where $P_{\alpha}$ is used
instead of $P_{h}$,
  the constant $2(n-1)\alpha_{q}^{\frac{1}{q}}$ in the proof of  \cite[Lemma 2.8]{Chen21}
is replaced by $C_{n,\alpha}(n+2\alpha)\alpha_{q}^{\frac{1}{q}}$, and we omit it.
\end{proof}

%
%
%
 \section{Special cases}\label{sec-4}

\subsection{The case $p=\infty$}
For any $x \in \mathbb{B}^{n}$, let $A$ be a unitary transformation such that $A\left(r e_{n}\right)=x$, where $r=|x|$. Since $u(0)=0$  and $u=P_{\alpha}[f]$, we have
$$
u(x)=u\left(A\left(r e_{n}\right)\right)=\int_{\mathbb{S}^{n-1}}\left(P_{\alpha}\left(A\left(r e_{n}\right), \eta\right)-C_{n,\alpha}\frac{\left(1-r^{2}\right)^{1+2\alpha}}{\left(1+r^{2}\right)^{\frac{n}{2}+\alpha}}\right) f(\eta) d \sigma(\eta) .
$$
For any $\eta \in \mathbb{S}^{n-1}$, let  $\xi=A^{-1} \eta$. Then
\begin{align}\label{eq-4.1}
	|u(x)|\leq&\|f\|_{L^{\infty}} \int_{\mathbb{S}^{n-1}}\left|P_{\alpha}\left(r e_{n}, \xi\right)-C_{n,\alpha}\frac{\left(1-r^{2}\right)^{1+2\alpha}}{\left(1+r^{2}\right)^{\frac{n}{2}+\alpha}}\right| d \sigma(\xi)  \\ \nonumber
=&\|f\|_{L^{\infty}} \int_{\mathbb{S}_{+}^{n-1}}\left(P_{\alpha}\left(r e_{n}, \xi\right)-C_{n,\alpha}\frac{\left(1-r^{2}\right)^{1+2\alpha}}{\left(1+r^{2}\right)^{\frac{n}{2}+\alpha}}\right) d \sigma(\xi) \\ \nonumber
	&+\|f\|_{L^{\infty}} \int_{\mathbb{S}_{-}^{n-1}}\left(C_{n,\alpha}\frac{\left(1-r^{2}\right)^{1+2\alpha}}{\left(1+r^{2}\right)^{\frac{n}{2}+\alpha}}-P_{\alpha}\left(r e_{n}, \xi\right)\right) d \sigma(\xi)  \\ \nonumber
=&U_{\alpha}\left(r e_{n}\right)\|f\|_{L^{\infty}},
\end{align}
where $U_{\alpha}$ is the mapping in Theorem \ref{thm-1.1}. By letting
$$
f(\eta)=C \operatorname{sign}\left(P_{\alpha}\left(r e_{n}, A^{-1} \eta\right)-C_{n,\alpha}\frac{\left(1-r^{2}\right)^{1+2\alpha}}{\left(1+r^{2}\right)^{\frac{n}{2}+\alpha}}\right)
$$
on $\mathbb{S}^{n-1}$, we obtain the sharpness of \eqref{eq-4.1}, where $C$ is a constant.

Further, by Lemma \ref{lem-3.2}, we obtain that the inequality \eqref{eq-1.9} holds for $p=\infty$ and this inequality is also sharp.

Next, we discuss the property of $G_{\infty}$. It follows from \eqref{eq-4.1} that
\begin{equation}\label{eq-4.2}
	G_{\infty}(r)=\int_{\mathbb{S}^{n-1}}\left|P_{\alpha}\left(r e_{n}, \eta\right)-a^{*}\right| d \sigma(\eta)=U_{\alpha}\left(r e_{n}\right),
\end{equation}
where  $a^{*}=C_{n,\alpha}\frac{\left(1-r^{2}\right)^{1+2\alpha}}{\left(1+r^{2}\right)^{n+2\alpha}}$.
Obviously, $G_{\infty}(0)=0$.

In the following, we compute the values of $U_{\alpha}\left(r e_{n}\right)$  (or  $G_{\infty}(r)$), where  $r \in[0,1)$. By using a spherical coordinate transformation, we obtain that
$$
U_{\alpha}\left(r e_{n}\right)
=C_{n,\alpha}\left(1-r^{2}\right)^{1+2\alpha} \frac{\Gamma\left(\frac{n}{2}\right)}{\sqrt{\pi} \Gamma\left(\frac{n-1}{2}\right)} \int_{0}^{\pi} \frac{\sin ^{n-2} \theta}{\left(1+r^{2}-2 r \cos \theta\right)^{\frac{n}{2}+\alpha}}\left(\chi_{\mathbb{S}_{+}^{n-1}}-\chi_{\mathbb{S}_{-}^{n-1}}\right) d \theta .
$$
Elementary calculations lead to
\begin{align*}
	&\int_{0}^{\pi} \frac{\sin ^{n-2} \theta}{\left(1+r^{2}-2 r \cos \theta\right)^{\frac{n}{2}+\alpha}}\left(\chi_{\mathbb{S}_{+}^{n-1}}-\chi_{\mathbb{S}_{-}^{n-1}}\right) d \theta \\
	&=\int_{0}^{\frac{\pi}{2}}\left(\frac{\sin ^{n-2} \theta}{\left(1+r^{2}-2 r \cos \theta\right)^{\frac{n}{2}+\alpha}}-\frac{\sin ^{n-2} \theta}{\left(1+r^{2}+2 r \cos \theta\right)^{\frac{n}{2}+\alpha}}\right) d \theta\\
	&=\frac{1}{\left(1+r^{2}\right)^{\frac{n}{2}+\alpha}} \sum_{k=0}^{\infty}\left(\int_{0}^{\pi / 2} \cos ^{k} \theta \sin ^{n-2} \theta d \theta\right)\frac{\Gamma\left(k+\alpha+\frac{n}{2}\right)}{\Gamma\left(\alpha+\frac{n}{2}\right)\Gamma\left(k+1\right)}\left(1-(-1)^{k}\right)\left(\frac{2r}{1+r^{2}}\right)^{k}.
\end{align*}
Since
$$
\int_{0}^{\pi / 2} \cos ^{k} \theta \sin ^{n-2} \theta d \theta=\frac{\Gamma\left(\frac{1+k}{2}\right) \Gamma\left(\frac{n-1}{2}\right)}{2 \Gamma\left(\frac{k+n}{2}\right)}
$$
(cf. \cite[p.19]{rain}), then
we obtain
\begin{align*}
U(re_{n})\;=&\;\frac{\Gamma\left(\frac{n}{2}+\alpha\right) \Gamma(1+\alpha)}{\Gamma\left(\frac{n}{2}\right) \Gamma(1+2 \alpha)}\frac{\Gamma\left(\frac{n}{2}\right)}{\sqrt{\pi} \Gamma\left(\frac{n-1}{2}\right)} \frac{\left(1-r^{2}\right)^{1+2\alpha}}{\left(1+r^{2}\right)^{\frac{n}{2}+\alpha}} \\
&\;\times\sum_{k=0}^{\infty} \frac{\Gamma\left(\frac{1+k}{2}\right) \Gamma\left(\frac{n-1}{2}\right)\Gamma\left(k+\alpha+\frac{n}{2}\right)}{2 \Gamma\left(\frac{k+n}{2}\right)\Gamma\left(\alpha+\frac{n}{2}\right)\Gamma\left(k+1\right)}\left(1-(-1)^{k}\right) \left(\frac{2r}{1+r^{2}}\right)^{k}\\
=&\;\frac{\Gamma(1+\alpha)}{\sqrt{\pi}\Gamma(1+2\alpha)}\frac{2r(1-r^{2})^{1+2\alpha}}{(1+r^{2})^{1+\alpha+\frac{n}{2}}}\\
&\;\times\sum_{k=0}^{\infty}\frac{\Gamma(k+1)\Gamma(2k+1+\alpha+\frac{n}{2})}{\Gamma(k+\frac{n+1}{2})\Gamma(2k+2)}\left(\frac{2r}{1+r^{2}}\right)^{2k}.
\end{align*}
By Legebdre's duplication formula (cf. \cite[p.24]{rain}), we get
\begin{align*}
	\frac{\Gamma(2 k+1+\alpha+\frac{n}{2})}{\Gamma\left(2k+2\right)}
	=\frac{2^{\alpha+\frac{n}{2}-1} \Gamma\left(k+\frac{n+2+2\alpha}{4}\right) \Gamma\left(k+\frac{n+4+2\alpha}{4}\right)}{ \Gamma\left(k+\frac{3}{2}\right) \Gamma(k+1)} .
\end{align*}
Therefore
\begin{align*}
	U_{\alpha}(re_{n})=&\;\frac{2^{\alpha+\frac{n}{2}}\Gamma(1+\alpha)}{\sqrt{\pi}\Gamma(1+2\alpha)}\frac{r(1-r^{2})^{1+2\alpha}}{(1+r^{2})^{1+\alpha+\frac{n}{2}}}\\
	&\;\times\sum_{k=0}^{\infty}\frac{\Gamma(k+1)\Gamma\left(k+\frac{n+2+2\alpha}{4}\right) \Gamma\left(k+\frac{n+4+2\alpha}{4}\right)}{\Gamma(k+\frac{n+1}{2}) \Gamma\left(k+\frac{3}{2}\right)\Gamma(k+1) }\left(\frac{4r^{2}}{(1+r^{2})^{2}}\right)^{k}\\
	=&\;\frac{2^{\alpha+\frac{n}{2}}\Gamma(1+\alpha)\Gamma(\frac{n+2+2\alpha}{4})\Gamma(\frac{n+4+2\alpha}{4})}{\sqrt{\pi}\Gamma(1+2\alpha)\Gamma(\frac{n+1}{2})\Gamma(\frac{3}{2})}\frac{r(1-r^{2})^{1+2\alpha}}{(1+r^{2})^{1+\alpha+\frac{n}{2}}}\\
	&\;\times\sum_{k=0}^{\infty}\frac{
	(1)_{k}(\frac{n+2+2\alpha}{4})_{k}(\frac{n+4+2\alpha}{4})_{k}}{(\frac{n+1}{2})_{k}(\frac{3}{2})_{k}k!}\left(\frac{4r^{2}}{(1+r^{2})^{2}}\right)^{k}\\
	=&\;\frac{2\Gamma(1+\alpha)\Gamma(1+\alpha+\frac{n}{2})r\left(1-r^{2}\right)^{1+2\alpha}}{\sqrt{\pi}\Gamma(1+2\alpha)\Gamma(\frac{1+n}{2})\left(1+r^{2}\right)^{1+\alpha+\frac{n}{2}}}\times\\
	&\;\;\;{_{3}F_{2
	}}\left(1,\frac{n+2+2\alpha}{4},\frac{n+4+2\alpha}{4};\frac{3}{2},\frac{n+1}{2};\frac{4r^{2}}{(1+r^{2})^{2}}\right).
\end{align*}

Obviously,  $G_{\infty}(r)=U_{\alpha}\left(r e_{n}\right)$ is differentiable in $[0,1)$.
Moreover, by replacing  $f_{*}=\chi_{\mathbb{S}_{+}^{n-1}-} \chi_{\mathbb{S}_{-}^{n-1}}$  and  $u_{*}=U_{\alpha}$, respectively,
the similar reasoning as in the proof of \cite[Claim 2.3]{Chen21} shows that $G_{\infty} $ is an increasing homeomorphism in $[0,1)$.

Since \eqref{eq-2.2}, \eqref{eq-4.1}, \eqref{eq-4.2} and Lemma \ref{lem-2.2} yield that
$$
1 \geq \sup _{r \in[0,1)} U_{\alpha}\left(r e_{n}\right)=\sup _{r \in[0,1)} G_{\infty}(r) \geq \frac{\left\|U_{\alpha}\right\|_{\mathcal{H}^{\infty}}}{\left\|\chi_{\mathbb{S}_{+}^{n-1}}-\chi_{\mathbb{S}_{-}^{n-1}}\right\|_{L^{\infty}}}=1,
$$
we see that $\lim _{r \rightarrow 1^{-}} G_{\infty}(r)=1$. Therefore, $ G_{\infty}$  maps  $[0,1)$  onto  $[0,1)$.
\medskip

\subsection{The case $p=2$}
In this case we deal with the extremal problem
$$G_{2}(r)=\left(\inf _{a \in \mathbb{R}} \int_{\mathbb{S}^{n-1}}\left|P_{\alpha}\left(r e_{n}, \eta\right)-a\right|^{2} d \sigma(\eta)\right)^{1 / 2}.
$$
By \cite[Lemma 2.1]{Liu04}, we obtain that
\begin{align*}
	&\int_{\mathbb{S}^{n-1}}  \left|P_{\alpha}\left(r e_{n}, \eta\right)-a\right|^{2} d \sigma(\eta) \\
=& \int_{\mathbb{S}^{n-1}} P_{\alpha}^{2}(re_{n}, \eta) d \sigma(\eta)+a^{2} \int_{\mathbb{S}^{n-1}} d \sigma(\eta)-2 a \int_{\mathbb{S}^{n-1}} P_{\alpha}\left(r e_{n}, \eta\right) d \sigma(\eta) \\
	=&\;\;C_{n,\alpha}^{2}\left(1-r^{2}\right)^{2+4\alpha}{_{2}F_{1}}\left(n+2\alpha, \frac{n}{2}+2\alpha+1 ; \frac{n}{2} ; r^{2}\right)+a^{2}\\
	&\;\;-2aC_{n,\alpha}(1-r^{2})^{1+2\alpha}{_{2}F_{1}}\left(\frac{n}{2}+\alpha,\alpha+1 ; \frac{n}{2} ; r^{2}\right).
\end{align*}
So $a^{*}=\;C_{n,\alpha}(1-r^{2})^{1+2\alpha}{_{2}F_{1}}\left(\frac{n}{2}+\alpha,\alpha+1 ; \frac{n}{2} ; r^{2}\right)$ and
\begin{align*}
	&\left(\inf _{a\in\mathbb{R}} \int_{\mathbb{S}^{n-1}} \left| P_{\alpha}\left(r e_{n}, \eta\right)-a\right|^{2} d \sigma(\eta)\right)^{1 / 2}
    =C_{n,\alpha}(1-r^{2})^{1+2\alpha}\\
	&\times \sqrt{{_{2}F_{1}}\left(n+2\alpha, \frac{n}{2}+2\alpha+1 ; \frac{n}{2} ; r^{2}\right)-
\left({_{2}F_{1}}\left(\frac{n}{2}+\alpha,\alpha+1 ; \frac{n}{2} ; r^{2}\right)\right)^{2}}.
\end{align*}
This, together with \eqref{eq-3.1}, implies that
\begin{align*}
	&\frac{|u(x)|}{C_{n,\alpha}  (1-r^{2})^{1+2\alpha}}\\
	&\leq  \|f\|_{L^{2}}
 \sqrt{{_{2}F_{1}}\left(n+2\alpha, \frac{n}{2}+2\alpha+1 ; \frac{n}{2} ; r^{2}\right)-\left({_{2}F_{1}}\left(\frac{n}{2}+\alpha,\alpha+1 ; \frac{n}{2} ; r^{2}\right)\right)^{2}}.
\end{align*}
\subsection{The case $p=1$}
In this case we have
$$
G_{1}(r)=\inf _{a \in \mathbb{R}} \sup _{\eta \in \mathbb{S}^{n-1}}\left|P_{\alpha}\left(r e_{n}, \eta\right)-a\right|
$$
for any $r \in[0,1)$. Since
$$
\max _{\eta \in \mathbb{S}^{n-1}} P_{\alpha}\left(r e_{n}, \eta\right)=C_{n,\alpha}\frac{(1-r^{2})^{1+2\alpha}}{(1-r)^{n+2\alpha}} \text { and } \min _{\eta\in \mathbb{S}^{n-1}} P_{\alpha}\left(r e_{n}, \eta\right)=C_{n,\alpha}\frac{(1-r^{2})^{1+2\alpha}}{(1+r)^{n+2\alpha}},
$$
we easily conclude that
$$
a^{*}=\frac{1}{2}C_{n,\alpha}\left(\frac{(1-r^{2})^{1+2\alpha}}{(1-r)^{n+2\alpha}}+\frac{(1-r^{2})^{1+2\alpha}}{(1+r)^{n+2\alpha}}\right)
$$
and
$$
G_{1}(r)=\frac{1}{2}C_{n,\alpha}\left(\frac{(1-r^{2})^{1+2\alpha}}{(1-r)^{n+2\alpha}}-\frac{(1-r^{2})^{1+2\alpha}}{(1+r)^{n+2\alpha}}\right).
$$
Obviously, $G_{1}(r)$ is a strictly increasing mapping from $[0,1)$  onto $[0, \infty)$ and $\frac{d}{dr}G_1 (r)$ is continuous in $[0,1)$.
Then for any  $x \in \mathbb{B}^{n} $ with  $|x|=r $, we have
\begin{equation}\label{eq-4.3}
	|u(x)| \leq G_{1}(r)\|f\|_{L^{1}}=\frac{1}{2}C_{n,\alpha}\left(\frac{(1-r^{2})^{1+2\alpha}}{(1-r)^{n+2\alpha}}-\frac{(1-r^{2})^{1+2\alpha}}{(1+r)^{n+2\alpha}}\right)\|f\|_{L^{1}}.
\end{equation}
This, together with the fact  $u(x)=D u(0) x+o(x)$, implies
\begin{equation}\label{eq-4.4}
	\|D u(0)\| \leq \limsup _{x \rightarrow 0} \frac{|u(x)|}{|x|} \leq \limsup _{r \rightarrow 0^{+}} \frac{G_{1}(r)}{r}\|f\|_{L^{1}}=C_{n,\alpha}(n+2\alpha)\|f\|_{L^{1}}.
\end{equation}
Here, $o(x)$  is a vector satisfying $\lim _{|x| \rightarrow 0^{+}} \frac{|o(x)|}{|x|}=0$.

Now, we show the sharpness of \eqref{eq-4.3} and \eqref{eq-4.4}. For  $i \in \mathbb{Z}^{+}, \eta \in \mathbb{S}^{n-1}$  and  $x \in \mathbb{B}^{n}$, we let
$$
f_{i}(\eta)=\frac{\chi_{\Omega_{i}}(\eta)}{2\left\|\chi_{\Omega_{i}}\right\|_{L^{1}}}-\frac{\chi_{\Omega_{i}^{\prime}}(\eta)}{2\left\|\chi_{\Omega_{i}^{\prime}}\right\|_{L^{1}}} \quad \text { and } \quad u_{i}(x)=P_{\alpha}\left[f_{i}\right](x) \text {, }
$$
where $\Omega_{i}=\left\{\eta \in \mathbb{S}^{n-1}:\left|\eta-e_{n}\right| \leq \frac{1}{i}\right\}$  and  $\Omega_{i}^{\prime}=\left\{\eta \in \mathbb{S}^{n-1}:\left|\eta+e_{n}\right| \leq \frac{1}{i}\right\}$.
Then the similar reasoning as in \cite[p.640 and 641]{Chen21}   shows the sharpness of \eqref{eq-4.3} and \eqref{eq-4.4}.

\vspace*{5mm}
\noindent{\bf Funding.}
 The first author was partly supported by NNSF of China (No. 12071121),
NSF of Hunan Province (No. 2022JJ30365), SRF of Hunan Provincial Education Department (No. 22B0034)
 and the construct program of the key discipline in Hunan Province.

\end{document}